\numberwithin{equation}{section}
\theoremstyle{plain}
\newtheorem{prop}{Proposition}
\newtheorem{theo}[prop]{Theorem}
\theoremstyle{definition}
\newtheorem{rema}[prop]{Remark}
\newtheorem{exam}[prop]{Example}
\def\ra{\rightarrow}
\def\cI{{\mathcal I}}
\def\cN{{\mathcal N}}
\def\cO{{\mathcal O}}
\def\cX{{\mathcal X}}
\def\cY{{\mathcal Y}}
\def\mo{{\mathfrak o}}
\def\mp{{\mathfrak p}}
\def\mq{{\mathfrak q}}
\def\mr{{\mathfrak r}}
\def\fS{{\mathfrak S}}
\def\bP{{\mathbb P}}
\def\bQ{{\mathbb Q}}
\def\bZ{{\mathbb Z}}
\def\bR{{\mathbb R}}
\def\bC{{\mathbb C}}
\def\bF{{\mathbb F}}
\def\Br{\mathrm{Br}}
\def\CH{\mathrm{CH}}
\def\Ker{\mathrm{Ker}}
\def\CH{\mathrm{CH}}
\def\Pic{\mathrm{Pic}}
\def\bPGL{\mathrm{PGL}}
\def\Hom{\mathrm{Hom}}
\def\lim{\mathrm{lim}}
\def\Jac{\mathrm{Jac}}
\author{Brendan Hassett}
\address{Department of Mathematics\\
Brown University \\
Box 1917 
151 Thayer Street
Providence, RI 02912 \\
USA}
\email{bhassett@math.brown.edu}
\author{Alena Pirutka}
\address{Courant Institute\\
                New York University \\
                New York, NY 10012 \\
                USA }
\email{pirutka@cims.nyu.edu}
\author{Yuri Tschinkel}
\address{Courant Institute\\
                New York University \\
                New York, NY 10012 \\
                USA }
\email{tschinkel@cims.nyu.edu}
\address{Simons Foundation\\
160 Fifth Avenue\\
New York, NY 10010\\
USA}
\title[Stable rationality]{Stable rationality of quadric surface bundles over surfaces}
\begin{document}
\date{\today}

\begin{abstract}
We study rationality properties of quadric surface bundles
over the projective plane. We exhibit families of smooth projective
complex fourfolds of this type over connected bases, containing both
rational and non-rational fibers.
\end{abstract}

\maketitle

\section{Introduction}

The main goal of this paper is to show that rationality
is not deformation invariant for families of smooth
complex projective varieties of dimension four.
Examples along these lines are known when singular
fibers are allowed, e.g., 
smooth cubic threefolds
(which are irrational) may specialize to cubic threefolds
with ordinary double points (which are rational), while
smooth cubic surfaces
(which are rational) may specialize
to cones over elliptic curves.
Totaro has constructed examples of this type with
mild singularities in higher dimensions \cite{totaro-15}.

Let $S$ be a smooth projective rational surface over the complex numbers 
with function field $K=\bC(S)$. A {\em quadric surface bundle} consists of a 
fourfold $X$ and a flat projective morphism $\pi:X\ra S$ such that the
generic fiber $Q/K$ of $\pi$ is a smooth quadric surface. We assume that $\pi$
factors through the projectivization of a rank four vector
bundle on $S$ such that the fibers are (possibly singular) quadric surfaces;
see Section~\ref{sect:quadric} for relevant background.

\begin{theo} \label{theo:main}
There exist smooth families of complex projective 
fourfolds $\phi: \mathcal X \ra B$ over 
connected varieties $B$, such that
for every $b\in B$ the fiber $\mathcal X_b=\phi^{-1}(b)$ 
is a quadric surface bundle over $\bP^2$, and satisfying:
\begin{enumerate}
\item for very general $b\in B$ the fiber $\mathcal X_b$ is not stably rational;
\item the set of points $b\in B$ such that $\mathcal X_b$ is rational is dense in $B$ for the Euclidean topology.  
\end{enumerate}
\end{theo}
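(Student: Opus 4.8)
The plan is to build the family so that a very general member is non-stably-rational by the specialization method of Voisin (as refined by Colliot-Thélène--Pirutka), while a dense set of members is rational because rationality of a quadric surface bundle can be detected by the existence of a rational section or, more weakly, by the existence of a multisection of odd degree together with triviality of a Brauer-type obstruction. I would treat the two assertions essentially independently, after first constructing an explicit $\bP^3$-bundle $\bP(\cE) \to \bP^2$ with $\cE$ a rank-four bundle and a divisor $\cX \subset \bP(\cE)$ of relative degree two cut out by a section of $\cO_{\bP(\cE)}(2)\otimes \pi^*\cL$; the family $\cB$ parametrizes the quadratic forms, i.e. an open subset of a linear system, and connectedness is automatic.

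For part (1), the hard analytic input is a single special fiber $\cX_0$ that is (a) not stably rational and (b) has at worst mild singularities so that the Voisin--CT--Pirutka specialization argument applies: one needs $\cX_0$ to admit a resolution $\widetilde{\cX}_0$ whose Chow group of zero-cycles is universally trivial being obstructed, typically detected via a nontrivial unramified cohomology class, here an unramified Brauer class on the generic fiber coming from the discriminant double cover of $\bP^2$ and the even Clifford algebra. I would follow the Artin--Mumford / Colliot-Thélène--Ojanguren template: choose the discriminant curve and the square root of the discriminant so that the quaternion Brauer class over $\bC(\bP^2)$ is ramified appropriately, conclude $\Br_{\mathrm{nr}}(\bC(\cX_0)) \neq 0$, hence $\cX_0$ is not (stably) rational, and check that the chosen $\cX_0$ has only the controlled singularities (ordinary, with small resolution or with a resolution having trivial relevant cohomology) so that specialization of universal triviality of $\CH_0$ goes through. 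Then for very general $b \in B$ the fiber $\cX_b$ is smooth and, by the specialization theorem, not stably rational. This degeneration step, and in particular verifying that the singular special fiber is "not too singular" in the precise technical sense required, is what I expect to be the main obstacle.

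For part (2), I would exploit that a quadric surface bundle over $\bP^2$ with generic fiber $Q/K$ is $K$-rational as soon as $Q$ has a $K$-point, i.e. as soon as $\pi$ has a rational section; more usefully, inside the parameter space $B$ the locus of fibers admitting a section is cut out by countably many conditions but contains a Zariski-dense, hence Euclidean-dense, constructible subset. Concretely, I would identify explicit subfamilies where the quadratic form contains a hyperbolic plane over $K$ — for instance where one can diagonalize the form so that two of the entries are (up to squares) $1$ and $-1$, or where the associated conic bundle / even Clifford invariant is trivial — these correspond to closed conditions on the coefficients defining positive-dimensional families of rational fibers, and taking all of them as the base ranges over the allowed coefficients produces a dense set in $B$. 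One must check that such rational fibers actually occur within the specific family constructed for part (1), i.e. that imposing "trivial Brauer class / existence of section" is compatible with smoothness and with lying in $B$; this is a dimension count plus an explicit construction and should be routine once the family is set up with enough parameters.

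Finally I would assemble: pick the family $\phi:\cX \to B$ large enough to contain both the bad fiber $\cX_0$ of part (1) (forcing non-stable-rationality of the very general fiber) and the dense rational locus of part (2), restrict to the smooth locus over a connected base, and invoke the two previous steps. The only compatibility to verify is that the very general fiber and the dense rational locus coexist, which is immediate since "very general" means outside a countable union of proper closed subsets while the rational locus, though dense, is contained in such a union — there is no contradiction, and this is exactly the phenomenon the theorem asserts.
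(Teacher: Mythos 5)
Your strategy for part (1) coincides with the paper's: exhibit one special fiber with nontrivial unramified $H^2$ (a quaternion class pulled back from $\bC(\bP^2)$) and a universally $\CH_0$-trivial resolution, then apply the specialization theorem to conclude that the very general fiber is not stably rational. You correctly identify the construction of such a fiber and the control of its singularities as the main work; the paper carries this out for the explicit hypersurface $yzs^2+xzt^2+xyu^2+F(x,y,z)v^2=0$ with $F=x^2+y^2+z^2-2(xy+xz+yz)$, the class $(x,y)$, and a hands-on resolution obtained by blowing up six singular conics. As an unexecuted plan this half is sound.

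Part (2) contains a genuine gap. You assert that the locus of fibers admitting a rational section ``contains a Zariski-dense, hence Euclidean-dense, constructible subset.'' This cannot be correct: a Zariski-dense constructible subset of the irreducible base contains a dense Zariski-open subset, which would make the \emph{very general} fiber rational and contradict part (1) --- and indeed you observe yourself at the end that the rational locus must lie in a countable union of proper closed subvarieties. Concretely, the locus of $(2,2)$-hypersurfaces containing a constant section $\bP^2\times\{p\}$ has codimension $3$ in the $59$-dimensional parameter space, so no enumeration of explicit subfamilies with sections gives density by itself; the individual Noether--Lefschetz components are proper subvarieties and nothing in your argument forces their union to accumulate everywhere. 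The missing ingredient is the variational Hodge-theoretic density mechanism: the paper first upgrades ``section'' to ``integral $(2,2)$-class of odd degree on the fibers'' (Proposition~\ref{prop:ratmain}, using Springer's theorem, the relative variety of lines with its Brauer class, the Lefschetz $(1,1)$ theorem, and the integral Hodge conjecture for these quadric bundles), and then proves Euclidean density of the corresponding Noether--Lefschetz loci by verifying Voisin's infinitesimal criterion --- surjectivity of $\bar{\nabla}(\gamma):T_{B,b_0}\to H^{1,3}(Y_{b_0})$ --- via an explicit Jacobian-ring computation at a chosen $b_0$, together with the existence of at least one odd-degree class. Without this step, or a substitute for it, the density assertion in part (2) is unproven.
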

Concretely, we consider smooth hypersurfaces 
$$X\subset \bP^2 \times \bP^3$$
of bidegree $(2,2)$; projection onto the first factor gives
the quadric surface bundle.

Our approach has two key elements. First, we apply the
technique of the decomposition of the diagonal
\cite{Voisin,ct-pirutka,ct-pir-cyclic,totaro-JAMS}
to show that very general $X\subset \bP^2 \times \bP^3$
of bidegree $(2,2)$ fail to be stably rational. 
The key is to identify a degenerate quadric surface
fibration, with non-trivial second unramified cohomology
and mild singularities. The analogous degenerate
conic bundles over $\bP^2$ are the Artin-Mumford
examples; deforming these allows one to show that
very general conic bundles of $\bP^2$ with large
degeneracy divisor fail to be stably rational \cite{HKT-conic}.
Second, quadric surface bundles are rational over the
base whenever
they admit a section, indeed, whenever they admit a 
multisection of odd degree. If the base is rational
then the total space is rational as well; this can
be achieved over a dense set of the moduli
space \cite{hassett-JAG,voisin-stable}. 
This technique also yields
rationality for a dense family of
cubic fourfolds containing a plane;
no cubic fourfolds have been shown not to be stably
rational.

This paper is inspired by the approach of Voisin
\cite{voisin-stable}, who also considers fourfolds
birational to quadric surface bundles. While our proof
of rationality is similar, the analysis of unramified
cohomology relies on work of Pirutka \cite{pirutka-survol}.

\

\noindent {\bf Acknowledgments:} The first author was 
partially
supported through NSF grant 1551514.

\section{Generalities}
\label{sect:gen}

We recall implications of 
the ``integral decomposition 
of the diagonal and specialization'' method, following 
\cite{ct-pirutka},  \cite{Voisin}, and \cite{pirutka-survol}. 

A projective variety $X$ over a field $k$ is {\em universally 
$\CH_0$-trivial} if for all field extensions $k'/k$ 
the natural degree homomorphism from the Chow group of zero-cycles
$$
\CH_0(X_{k'})\ra \bZ
$$
is an isomorphism. Examples include smooth $k$-rational varieties. More complicated examples arise as follows:

\begin{exam} \cite[Lemma 2.3, Lemma 2.4]{ct-pir-cyclic}
\label{exam:trivial}
Let $X=\cup_i X_i$ be a projective, reduced, geometrically connected variety over a field $k$ such that:
\begin{itemize}
\item Each component $X_i$ is geometrically irreducible and $k$-rational, with isolated singularities.
\item Each intersection $X_i\cap X_j$ is either empty or has a zero-cycle of degree 1. 
\end{itemize}
Then $X$ is universally $\CH_0$-trivial. 
\end{exam}

A projective morphism 
$$
\beta: \tilde{X}\ra X
$$ 
of $k$-varieties 
is {\em universally $\CH_0$-trivial} if for all extensions
$k'/k$ the push-forward homomorphism
$$
\beta_* : \CH_0(\tilde{X}_{k'})\ra \CH_0(X_{k'})
$$
is an isomorphism.

\begin{prop}\cite[Proposition 1.8]{ct-pirutka} \label{prop:restrivial}
Let 
$$
\beta:\tilde{X}\ra X
$$ 
be a projective morphism such that for every scheme point $x$ of $X$, 
the fiber $\beta^{-1}(x)$, considered as a variety over the residue field $\kappa(x)$,  
is universally $\CH_0$-trivial. Then $\beta$ is universally $\CH_0$-trivial. 
\end{prop}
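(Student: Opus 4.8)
The plan is to reduce to a statement over the given ground field and then argue directly with zero-cycles. Since the formation of the fiber $\beta^{-1}(x)$ commutes with extension of $\kappa(x)$, the hypothesis is stable under base change: for $k'/k$, a scheme point $x'$ of $X_{k'}$ lies over a scheme point $x$ of $X$, the map $\kappa(x)\to\kappa(x')$ is a field extension, and $\beta_{k'}^{-1}(x')=\beta^{-1}(x)\times_{\kappa(x)}\kappa(x')$ is again universally $\CH_0$-trivial. Hence it suffices to prove, for every $\beta$ as in the statement over an arbitrary field, that $\beta_*\colon \CH_0(\tilde X)\to\CH_0(X)$ is an isomorphism. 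Note that every fiber of $\beta$ is nonempty (a universally $\CH_0$-trivial variety has $\CH_0\cong\bZ\neq 0$), so $\beta$ is surjective.

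Surjectivity of $\beta_*$ is immediate: for a closed point $x\in X$ the degree map $\CH_0(\beta^{-1}(x))\to\bZ$ is an isomorphism, so $\beta^{-1}(x)$ carries a zero-cycle of degree $1$, whose direct image is $[x]\in\CH_0(X)$; and such classes generate $\CH_0(X)$.

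The injectivity is the real content. Given a zero-cycle $\tilde z$ on $\tilde X$ with $\beta_*\tilde z=0$ in $\CH_0(X)$, unwind the definition of rational equivalence: there are finitely many integral curves $W_k\subset X$ and functions $r_k\in\kappa(W_k)^\times$ with $\beta_*\tilde z=\sum_k\mathrm{div}_{W_k}(r_k)$, an \emph{equality of cycles}. For each $k$ the generic fiber $\beta^{-1}(\eta_{W_k})$ is, by hypothesis, universally $\CH_0$-trivial over $\kappa(W_k)$, so it carries a zero-cycle $\sum_j n_{k,j}[\tilde q_{k,j}]$ of degree $1$; put $\delta_{k,j}=[\kappa(\tilde q_{k,j}):\kappa(W_k)]$, so $\sum_j n_{k,j}\delta_{k,j}=1$. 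Let $W_{k,j}\subset\tilde X$ be the Zariski closure of $\tilde q_{k,j}$; this is an integral curve on which $\beta$ restricts to a finite surjective morphism $\pi_{k,j}\colon W_{k,j}\to W_k$ of degree $\delta_{k,j}$. Using the compatibility of $\mathrm{div}$ with proper pushforward of curves, namely $\pi_*\mathrm{div}_{W'}(\pi^*r)=(\deg\pi)\,\mathrm{div}_W(r)$ (valid with no smoothness hypothesis, since it comes from the norm), the cycle
$$\tilde z_0:=\sum_{k,j}n_{k,j}\,\mathrm{div}_{W_{k,j}}(\pi_{k,j}^*r_k)$$
is rationally trivial on $\tilde X$ and has $\beta_*\tilde z_0=\sum_k\bigl(\sum_j n_{k,j}\delta_{k,j}\bigr)\mathrm{div}_{W_k}(r_k)=\beta_*\tilde z$, again an equality of cycles. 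Hence $\tilde w:=\tilde z-\tilde z_0$ satisfies $\beta_*\tilde w=0$ as a cycle, so $\tilde w$ is supported over finitely many closed points $x$ of $X$ and for each such $x$ the part $\tilde w_x$ of $\tilde w$ lying over $x$ is a zero-cycle of degree $0$ on $\beta^{-1}(x)$. Since $\CH_0(\beta^{-1}(x))\cong\bZ$, each $\tilde w_x$ is rationally trivial on $\beta^{-1}(x)$, hence on $\tilde X$; therefore so is $\tilde w$, and $\tilde z=\tilde w+\tilde z_0$ is rationally trivial.

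The one genuinely nontrivial step is the production of the curves $W_{k,j}\subset\tilde X$ over which to perform the rational equivalences: one cannot in general lift the relation $\sum_k\mathrm{div}_{W_k}(r_k)$ by choosing a section, or even an odd-degree multisection, of $\beta$ over the $W_k$, since these need not exist. Passing to the generic fiber of $\beta$ over each $W_k$---where a degree-$1$ zero-cycle is supplied by the universal $\CH_0$-triviality hypothesis---and then spreading out its points by taking closures is exactly the device that makes the lift possible. The remaining points (that the closures $W_{k,j}$ are curves, finite of degree $\delta_{k,j}$ over $W_k$, and the pushforward formula for $\mathrm{div}$ on possibly singular curves) are routine.
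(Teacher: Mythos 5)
Your argument is correct, and it is essentially the proof of the cited source (the paper itself states this proposition without proof, referring to Colliot-Th\'el\`ene--Pirutka, Proposition 1.8): reduce to the ground field, get surjectivity from degree-one cycles on closed fibers, and for injectivity lift each relation $\mathrm{div}_{W_k}(r_k)$ by taking closures of the points of a degree-one zero-cycle on the generic fiber over $W_k$ and pulling back $r_k$, leaving a cycle that is fiberwise of degree zero over finitely many closed points and hence rationally trivial by the hypothesis on those fibers. No gaps; the finiteness of $W_{k,j}\to W_k$ and the norm formula for $\mathrm{div}$ on possibly singular curves are, as you say, standard.
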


For example, if $X$ is a smooth projective variety and 
$$
\beta: \mathrm{Bl}_Z(X)\ra X
$$ 
is a blowup of a smooth subvariety $Z\subset X$,
then $\beta$ is 
a universally $\CH_0$-trivial morphism,
since all fibers over (scheme) points are projective spaces.
More interesting examples arise as resolutions
of singularities of certain singular projective varieties.

Examples of failure of universal $\CH_0$-triviality are given by smooth projective varieties $X$ with nontrivial Brauer group $\Br(X)$, or more generally, 
by varieties with nontrivial higher unramified cohomology \cite[Section 1]{ct-pirutka}. The following specialization argument is the key to recent advances in 
investigations of stable rationality: 

\begin{theo}
\label{thm:main-help}
\cite[Theorem 2.1]{Voisin}, \cite[Theorem 2.3]{ct-pirutka}
Let 
$$
\phi: \mathcal X\ra B
$$ 
be a flat projective morphism  of complex varieties with smooth generic fiber. 
Assume that there exists a point $b\in B$ such that the fiber 
$$
X:=\phi^{-1}(b)
$$ 
satisfies the following conditions:
\begin{itemize}
\item    the group   $H^2_{nr}(\mathbb C(X), \bZ/2)$ is nontrivial;
\item $X$ admits a desingularization 
$$
\beta: \tilde{X}\ra X
$$
such that the morphism $\beta$ is universally $\CH_0$-trivial. 
\end{itemize}
Then a very general fiber of $\phi$ is not stably rational.
\end{theo}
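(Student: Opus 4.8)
The plan is to argue by contradiction: upgrade ``stably rational'' to a decomposition of the diagonal, make that decomposition generic over $B$, specialize it to $X$, transfer it across $\beta$ to the smooth model $\tilde X$, and then invoke the vanishing of higher unramified cohomology for smooth projective varieties with a decomposition of the diagonal. Recall that a smooth projective stably rational variety $Y$ over a field $\kappa$ admits an \emph{integral decomposition of the diagonal}: $[\Delta_Y]=[Y\times y_0]+[Z]$ in $\CH_{\dim Y}(Y\times_\kappa Y)$, with $y_0\in Y(\kappa)$ and $Z$ supported on $D\times Y$ for some divisor $D\subsetneq Y$; equivalently, the class $\delta_Y\in\CH_0(Y_{\kappa(Y)})$ of the canonical point equals $[y_0]$ (see \cite{ct-pirutka} and the references there). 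Following \cite{ct-pirutka} one extends the second formulation to a possibly singular proper variety $X/\kappa$: $X$ has a \emph{$\CH_0$-decomposition of the diagonal} if $\delta_X=[x_0]$ in $\CH_0(X_{\kappa(X)})$ for some closed point $x_0$. So I would assume, contrary to the desired conclusion, that $\mathcal X_b$ is stably rational for very general $b\in B$, and may assume $B$ irreducible.

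First I would make the decomposition generic over $B$. Over the dense open locus where $\phi$ is smooth, the very general fiber is smooth, projective and stably rational, hence carries a decomposition of the diagonal; by Voisin's countability/spreading-out argument---the relevant relative Chow group of $\mathcal X\times_B\mathcal X$, together with the witnessing rational equivalences, is governed by countably many algebraic families of cycles---the set of such $b$ is a countable union of closed subsets of the smooth locus, and, being very general, it is the entire smooth locus. Thus the generic fiber $\mathcal X_{\bC(B)}$ has a decomposition of the diagonal, and, its witnessing rational equivalence being defined over a finitely generated field, this spreads out to a relative decomposition over a dense open $V\subseteq B$. Next I would specialize it to $b$: pick a discrete valuation ring $R$ with a morphism $\Spec R\to B$ carrying the closed point to $b$ and the generic point into $V$ (take a curve through $b$ meeting $V$, normalize, and localize), and base-change $\phi$ to obtain a flat proper family over $\Spec R$ with stably rational generic fiber and with special fiber $X$. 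Specializing the cycles occurring in the generic decomposition---take their Zariski closures over $\Spec R$, which are $R$-flat, and restrict to the special fiber---then produces, and this is the technical heart, a $\CH_0$-decomposition of the diagonal of $X$: $\delta_X=[x_0]$ in $\CH_0(X_{\bC(X)})$ for some $x_0\in X(\bC)$.

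Now I would transfer this to $\tilde X$. Since $\beta:\tilde X\to X$ is universally $\CH_0$-trivial by hypothesis---a property one verifies in practice fibrewise via Proposition~\ref{prop:restrivial}---the pushforward $\beta_*\colon\CH_0(\tilde X_{\bC(X)})\to\CH_0(X_{\bC(X)})$ is an isomorphism; moreover $\beta$ is birational, so $\bC(\tilde X)=\bC(X)$ and $\beta_*\delta_{\tilde X}=\delta_X$. Hence $\delta_{\tilde X}=[\tilde x_0]$ in $\CH_0(\tilde X_{\bC(\tilde X)})$ for some degree-one zero-cycle $\tilde x_0$ defined over $\bC$; that is, the smooth projective variety $\tilde X$ has a $\CH_0$-decomposition of the diagonal. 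Spreading the underlying rational equivalence, which lives over the generic point of the first projection $\tilde X\times\tilde X\to\tilde X$, back over $\tilde X$ promotes this to an integral decomposition $[\Delta_{\tilde X}]=[\tilde X\times\tilde x_0]+[Z]$ with $Z$ supported on $D\times\tilde X$, $D\subsetneq\tilde X$. Finally, on smooth projective varieties $H^\ast_{nr}(-,\bZ/2)$ carries an action of correspondences under which $[\Delta_{\tilde X}]$ acts as the identity, $[\tilde X\times\tilde x_0]$ factors through $H^\ast_{nr}(\bC,\bZ/2)$, and $[Z]$ factors through the unramified cohomology of the proper subvariety $D$; in degrees $\ge 1$ the last two act as zero \cite[Section 1]{ct-pirutka}. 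Therefore $H^2_{nr}(\bC(\tilde X),\bZ/2)=0$, and since $\bC(\tilde X)=\bC(X)$ this contradicts the hypothesis $H^2_{nr}(\bC(X),\bZ/2)\neq 0$. Hence a very general fiber of $\phi$ is not stably rational.

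The hard part is the specialization in the second paragraph: the decomposition of the diagonal exists only on the smooth, very general fibers, and one must carry it over to the possibly badly singular special fiber $X$. This needs Voisin's countability argument to first make the decomposition generic over the base, followed by a careful specialization of cycles over a discrete valuation ring. Once a $\CH_0$-decomposition of the diagonal of $X$ is in hand, the hypothesis that $X$ admits a \emph{universally} $\CH_0$-trivial resolution---rather than an arbitrary desingularization---is precisely what permits the transfer to the smooth model $\tilde X$, and with it the concluding appeal to the vanishing of unramified cohomology.
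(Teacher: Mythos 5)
The paper does not prove this theorem but simply cites \cite{Voisin} and \cite{ct-pirutka}, and your sketch correctly reconstructs the argument of those references: the countability/spreading argument to obtain a decomposition of the diagonal on the generic fiber, Fulton-style specialization over a discrete valuation ring to the singular special fiber, transfer to the smooth model via the universal $\CH_0$-triviality of $\beta$, and the vanishing of higher unramified cohomology for smooth projective varieties admitting an integral decomposition of the diagonal. This is essentially the same approach as the (cited) proof, so there is nothing further to add.
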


\section{Quadric surface bundles}
\label{sect:quadric}

Let $S$ be a smooth projective variety over $\bC$. 
Suppose that $\pi:X\ra S$ is a quadric surface bundle, i.e.,
a flat projective morphism from a variety such that
the generic fiber $Q$ is a smooth quadric surface.
We assume it admits a factorization
$$X \hookrightarrow \bP(V) \rightarrow S,$$
where $V\ra S$ is a rank four vector bundle and the fibers
of $\pi$ are expressed as quadric surfaces in the fibers of $\bP(V) \ra S$.
There is a well-defined degeneracy divisor $D\subset S$ corresponding
to where the associated quadratic form drops rank.  

Trivializing $V$ over an open neighborhood of $S$, $X$ may be expressed using
a symmetric $4\times 4$ matrix $(a_{ij})$:
$$
\sum a_{ij}x_ix_j=0.
$$
The local equation for $D$ is
the determinant $\det((a_{ij})).$ 
Note that $D$ has multiplicity $\ge 2$ where the rank of fibers is less
than three. Indeed, the hypersurface
$$\{\det(a_{ij})=0 \} \subset \bP^9_{(a_{ij})}$$
is singular precisely where all the $3\times 3$ minors vanish.

\

It is well known that $Q$ is rational over $K=\bC(S)$ if and only if $Q(K)\neq \emptyset$,
i.e., when $\pi$ admits a rational section. A theorem of Springer \cite{Springer} implies
that $Q(K)\neq \emptyset$ provided $Q(K')\neq \emptyset$ for some extension $K'/K$ of
odd degree, i.e., when $\pi$ admits a rational multisection of odd degree.
Thus we obtain 

\begin{prop} \label{prop:rattest}
Let $\pi:X \ra S$ be a quadric surface bundle as above, 
with
$S$ rational. Then $X$ is rational provided $\pi$
admits a multisection of odd degree.
\end{prop}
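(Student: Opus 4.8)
The plan is to reduce rationality of the total space $X$ to rationality of its generic fiber $Q$ over $K=\bC(S)$, and to obtain the latter by producing a $K$-rational point on $Q$ via Springer's theorem.

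First I would pass from the hypothesis to a closed point of odd degree on $Q$. An odd-degree multisection is (the support of) a closed subscheme $M\subset X$ with $\pi|_M\colon M\ra S$ dominant and generically finite of odd degree $d$; its generic fiber $M_K\subset Q$ is a zero-dimensional $K$-scheme, and the associated effective zero-cycle may be written $\sum_i n_i[x_i]$ with $x_i$ closed points of $Q$ and $\sum_i n_i[\kappa(x_i):K]=d$ odd. Hence some summand $n_i[\kappa(x_i):K]$ is odd, so $[\kappa(x_i):K]$ is odd, and the rank-four quadratic form $q$ cutting out $Q\subset\bP^3_K$ becomes isotropic over the odd-degree extension $K':=\kappa(x_i)$ of $K$. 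Springer's theorem \cite{Springer} then gives that $q$ is already isotropic over $K$, i.e.\ $Q(K)\neq\emptyset$.

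Next I would invoke the classical fact that a smooth quadric surface $Q\subset\bP^3_K$ possessing a $K$-rational point is $K$-rational: projection away from that point onto a complementary plane $\bP^2_K$ is a birational map defined over $K$. Consequently $\bC(X)=K(Q)$ is purely transcendental over $K$ of transcendence degree two. Finally, since $S$ is rational, $K=\bC(S)$ is itself purely transcendental over $\bC$, so by transitivity $\bC(X)$ is purely transcendental over $\bC$ and $X$ is rational.

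The argument presents no serious obstacle; the only point requiring a little care is that an odd-degree multisection a priori only furnishes an effective zero-cycle of odd degree on $Q$, and one must note that such a cycle necessarily involves a closed point whose residue field has odd degree over $K$, so that Springer's theorem applies. The remaining ingredients — projecting a smooth quadric from a rational point, and transitivity of purely transcendental extensions — are standard, and the rationality (rather than mere unirationality or stable rationality) of $X$ uses the hypothesis that $S$ is rational in an essential way.
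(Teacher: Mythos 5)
Your proof is correct and follows exactly the route the paper takes: extract an odd-degree point on the generic fiber from the multisection, apply Springer's theorem to get a $K$-rational point, project from that point to see $Q$ is $K$-rational, and conclude via rationality of $S$. The only difference is that you spell out the (standard) reduction from an odd-degree zero-cycle to a closed point of odd residue degree, which the paper leaves implicit.
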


Let $F_1(X/S) \ra S$ denote the relative variety of lines
of $\pi$. Let $S_{\circ} \subset S$ denote the largest
open subset such that $S_{\circ} \cap D$ is smooth
and $X_{\circ}=X\times_S S_{\circ}.$
Then 
$F_1(X_{\circ}/S_{\circ}) \ra S_{\circ}$
factors 
$$F_1(X_{\circ}/S_{\circ}) \stackrel{p}{\ra} T_{\circ} \ra S_{\circ},$$
where the second morphism is a double cover branched
along $S_{\circ}\cap D$ and the first morphism
is an \'etale $\bP^1$-bundle. 
In particular $F_1(X_{\circ}/S_{\circ})$ is non-singular.
Let $\alpha \in \Br(T_{\circ})[2]$ denote the
Brauer class arising from $p$.

Let $F$ be a 
resolution of the closure of $F_1(X_{\circ}/S_{\circ})$
in $F_1(X/S)$ obtained by blowing up over the complement
of $S_{\circ}$. 
The incidence correspondence between $X$ and $F_1(X/S)$
$$\Gamma' \subset X \times_S F_1(X/S)$$
induces a correspondence $\Gamma$ between $X$ and $F$
and a homomorphism
$$\Gamma_*:\CH^2(X) \ra \Pic(F).$$
Let $\eta$ denote the generic point of $S$; there is a homomorphism
$$\Xi_*:\Pic(F_{\eta}) \ra \CH^2(X_{\eta}).$$
constructed is follows: Consider $Z\subset F_{\eta}$ a finite reduced
subscheme with support on each component of $F_{\eta}$, e.g., a choice
of $n$ lines from each ruling.
Take the union of the corresponding rulings in $X_{\eta}$ and 
set $\Xi_*(Z) \subset X_{\eta}$ to be its singular locus, e.g., $n^2$
points where the rulings cross. This is compatible with rational 
equivalence and yields the desired homomorphism.
Thus a divisor with odd degree on each geometric component of 
$F_{\eta}$ gives rise to a multisection of odd degree.

The correspondences $\Gamma$ and $\Xi$ 
guarantee the following conditions
are equivalent:
\begin{itemize}
\item{$\alpha=0;$}
\item{$F$ admits a divisor intersecting the generic fiber $F_{\eta}$
with odd degree on each component;}
\item{$X$ admits a multisection of odd degree.}
\end{itemize}
The correspondence $\Gamma$ also
acts at the level of Hodge classes; here we obtain an
equivalence:
\begin{itemize}
\item{$F$ admits an integral $(1,1)$-class
intersecting the generic fiber $F_{\eta}$
with odd degree on each component;}
\item{$X$ admits an integral $(2,2)$-class
intersecting the fibers of $\pi$ with odd degree.}
\end{itemize}
Applying the Lefschetz $(1,1)$ Theorem to $F$
and Proposition~\ref{prop:rattest}
we obtain: 
\begin{prop} \label{prop:ratmain}
Let $\pi:X \ra S$ be a quadric surface bundle as above, with $X$ smooth
and $S$ rational.
Then $X$ is rational if it
admits an integral $(2,2)$-class meeting the fibers
of $\pi$ in odd degree.
\end{prop}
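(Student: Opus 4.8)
The plan is to combine the preceding discussion—which produced the correspondences $\Gamma$ and $\Xi$ relating divisor classes on $F$ to codimension-two cycles on $X$—with Proposition~\ref{prop:rattest}. By hypothesis $X$ admits an integral $(2,2)$-class $z$ meeting the fibers of $\pi$ in odd degree. First I would pull $z$ back to the generic fiber: restricting to $X_\eta$ over the generic point $\eta$ of $S$ and applying the correspondence $\Gamma_*$, one obtains a class in $\Pic(F_\eta)$. The key point, already established in the bulleted equivalences above, is that $\Gamma$ and $\Xi$ are mutually compatible at the level of Hodge classes and of divisors/codimension-two cycles, so that an integral $(2,2)$-class on $X$ meeting the fibers of $\pi$ in odd degree corresponds to an integral $(1,1)$-class on $F$ meeting the generic fiber $F_\eta$ in odd degree on each component.

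The second step is to make that $(1,1)$-class algebraic. Since $F$ is smooth projective (it was constructed as a resolution by blowing up over the complement of $S_\circ$), the Lefschetz $(1,1)$ Theorem applies: every integral $(1,1)$-class on $F$ is the class of a genuine divisor. Hence $F$ carries an actual divisor $\mathcal D$ restricting to $F_\eta$ with odd degree on each geometric component. Restricting $\mathcal D$ to the generic fiber and applying the homomorphism $\Xi_*:\Pic(F_\eta)\to \CH^2(X_\eta)$ constructed above (take the finite reduced subscheme $Z\subset F_\eta$ supported on each component with the prescribed odd degrees, form the union of the corresponding rulings in $X_\eta$, and take its singular locus), we obtain a zero-cycle on $X_\eta$, i.e.\ a multisection of $\pi$, whose degree is odd on each component; since the total degree of a zero-cycle coming from $\Xi_*(Z)$ is computed from the per-component degrees of $Z$, the resulting multisection has odd degree.

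Finally, with $S$ rational and $\pi:X\to S$ a quadric surface bundle admitting a multisection of odd degree, Proposition~\ref{prop:rattest} immediately yields that $X$ is rational, completing the proof.

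I expect the main obstacle to be purely bookkeeping rather than conceptual: one must check that the composite $\Xi_*\circ(\text{restriction})\circ\Gamma_*$ indeed sends an odd-degree $(2,2)$-class to an odd-degree multisection—equivalently, that the parity is preserved under these correspondences—and that the resolution $F$ is smooth enough for Lefschetz $(1,1)$ to apply. Both of these are handled by the construction of $\Gamma$, $\Xi$, and $F$ given in the paragraphs preceding the statement, so the proof is essentially an assembly of those ingredients.
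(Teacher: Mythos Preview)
Your proposal is correct and follows essentially the same route as the paper: use the correspondence $\Gamma$ at the Hodge level to pass from an odd-degree integral $(2,2)$-class on $X$ to an integral $(1,1)$-class on $F$ of odd degree on each component of $F_\eta$, invoke the Lefschetz $(1,1)$ Theorem on the smooth projective $F$ to make this class algebraic, then use the equivalence mediated by $\Xi$ to produce an odd-degree multisection of $\pi$, and conclude via Proposition~\ref{prop:rattest}. The paper's argument is presented more tersely as the two bulleted equivalences followed by a one-line appeal to Lefschetz $(1,1)$ and Proposition~\ref{prop:rattest}, but the substance is identical to what you wrote.
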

\begin{rema}
See \cite[Cor.~8.2]{CTV} for results on the integral Hodge
conjecture for quadric bundles over surfaces; these 
suffice for our application to quadric surface bundles over $\bP^2$.
\end{rema}

The generic fiber of $\pi$ is a quadric surface, that admits a diagonal form
\begin{equation}
\label{eqn:qq}
Q=<1,a,b,abd>, 
\end{equation}
i.e., is given by the equation
$$                                                                                                        
s^2+at^2+bu^2+abd v^2=0                                                                            
$$
where $a,b,d\in K^\times$ and $(s, t,u,v)$ are homogeneous coordinates in $\bP^3$.  Note that since $k:=\mathbb C\subset K$, this form is equivalent to the form  $<1,-a,-b,abd>$.

Theorem 3.17 in \cite{pirutka-survol} gives a general formula for the unramified $H^2$ of the field $K(Q)$, in terms
of the divisor of rational functions $a,b,d\in K^\times$, under the assumption that $d$ is not a square. 

In Section~\ref{sect:brauer} we will analyze the following special case:

\begin{exam}
\label{exam:basic}
Consider the fourfold $X\subset \bP^2\times \bP^3$ given by
\begin{equation}
\label{eqn:basic}
 yz s^2 + xz t^2 + xy u^2 + F(x,y,z) v^2 = 0,
\end{equation}
where 
$$
F(x,y,z)=x^2+y^2+z^2 - 2(xy+xz+yz).
$$
Dehomogenize by setting $z=1$ to obtain a quadric surface over $k(\bP^2)$:
$$y s^2 + x t^2 + xy u^2 + F(x,y,1) v^2 =0.$$
Multiplying through by $xy$ and absorbing squares into the variables
yields
$$
x S^2 + y T^2 + U^2 + xy F(x,y,1) V^2 =0,
$$
which is of the form (\ref{eqn:qq}).

We compute the divisor $D\subset \bP^2$
parametrizing singular fibers of $\pi: X\ra \bP^2$. This is
is reducible, consisting of the coordinate lines (with multiplicty two)
and a conic tangent to each of the lines:
$$D=\{ x^2y^2z^2(x^2+y^2+z^2-2(xy+xz+yz))=0 \}.$$
\end{exam}

We will therefore be interested in quadric surface
bundles over $\bP^2$ degenerating over octic plane
curves $D'\subset \bP^2$. 
The corresponding double covers $T\ra \bP^2$ are {\em Horikawa} surfaces, i.e.,  
minimal surfaces of general type such that $c_1^2=2\chi-6$. 
Their moduli space is irreducible. The paper \cite{persson} gives
examples of such surfaces with maximal Picard numbers, obtained by degenerating 
$D'$.

\begin{prop}
\label{prop:deforms}
For a generic octic curve $D'\subset \bP^2$ 
there exists a smooth curve $B$ and a flat family   
$$
\phi:\mathcal X\ra B
$$
of hypersurfaces in $\bP^2 \times \bP^3$ of bidegree
$(2,2)$ and points $b,b'\in B$ 
such that 
\begin{itemize}
\item $X=\phi^{-1}(b)$ is the quadric bundle from Example~\ref{exam:basic} and
\item $\pi':X'=\phi^{-1}(b')\ra \bP^2$ is smooth and degenerates exactly over $D'$. 
\end{itemize}
\end{prop}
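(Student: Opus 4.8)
The plan is to exhibit an explicit one-parameter linear family inside the projective space of bidegree $(2,2)$ hypersurfaces in $\bP^2\times\bP^3$ that connects the special quadric bundle $X$ of Example~\ref{exam:basic} to a smooth bundle degenerating over (a perturbation of) the given generic octic. Fix a generic octic $D'\subset\bP^2$ and write it as $\{G(x,y,z)=0\}$ with $G$ a generic degree-$8$ form. The degeneracy octic of the fourfold~\eqref{eqn:basic} is $D=\{x^2y^2z^2\,F(x,y,z)=0\}$, a degenerate member of the same linear system $|\cO_{\bP^2}(8)|$. I would first check that a general $(2,2)$ hypersurface over $\bP^2$ with a chosen smooth octic degeneracy divisor exists and is smooth as a fourfold: this is a standard Bertini/dimension count, using that the map from the space of symmetric $4\times 4$ matrices of linear-in-$(x,y,z)$... no, rather of the appropriate bidegree forms, to the space of octics (by taking determinants) is dominant, and that over a smooth octic the generic such quadric bundle is a smooth fourfold (the only way $X'$ can be singular over a point where $D'$ is smooth is if the fiber has rank $\le 2$, which is a codimension-$\ge 2$ condition on the octic and hence avoided for generic $D'$).

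Next I would set up the family. Inside $\bP(H^0(\bP^2\times\bP^3,\cO(2,2)))$, take the pencil (or a general line) through the point $[X]$ corresponding to~\eqref{eqn:basic} and a point $[X']$ corresponding to a smooth quadric surface bundle $\pi':X'\to\bP^2$ whose degeneracy divisor is exactly $D'$; such an $[X']$ exists by the previous paragraph once we note $D'$ can be taken to be an arbitrary generic octic. Let $B$ be the normalization of this line (or an open subset of it, or a suitable branched cover to ensure smoothness), so that $B$ is a smooth curve with marked points $b,b'$ mapping to $[X]$ and $[X']$. Setting $\mathcal X\to B$ to be the restriction of the universal family, flatness is automatic (the fibers are all hypersurfaces of the same bidegree, hence all have the same Hilbert polynomial), and $\phi^{-1}(b)=X$, $\phi^{-1}(b')=X'$ by construction. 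There is nothing to prove about $X'$ being smooth and degenerating exactly over $D'$ — that was how $[X']$ was selected.

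The one genuine subtlety — and the step I expect to be the main obstacle — is the interplay between ``generic octic $D'$'' and the smoothness of $X'$: one must make sure that the locus of $[X']\in\bP(H^0(\cO(2,2)))$ with $X'$ smooth and with $\det$-locus equal to a \emph{prescribed} octic is nonempty for $D'$ in a dense (Zariski-open) subset of $|\cO_{\bP^2}(8)|$, not merely for \emph{some} octic. I would handle this by considering the incidence variety $\{([X'],D')\colon \det\text{-locus of }X'=D'\}$, projecting to the octic space, showing the projection is dominant with irreducible general fiber, and then intersecting with the open condition ``$X'$ smooth'' (nonempty by the Bertini argument above, hence open dense in each fiber over a general octic). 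A mild additional point: the chosen line in $\bP(H^0(\cO(2,2)))$ should meet the ``smooth fourfold'' locus and should have $b'$ land in it; since that locus is open and nonempty and contains a point with the right degeneracy octic, a general line through $[X]$ and such a point works, possibly after shrinking $B$ to an open neighborhood of $\{b,b'\}$. No compatibility with the matrix entries of~\eqref{eqn:basic} is needed beyond the fact that its degeneracy octic lies in the closure of the smooth ones, which is clear since $x^2y^2z^2F$ is a limit of smooth octics.
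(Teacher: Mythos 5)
Your overall architecture is the same as the paper's: reduce everything to producing, for generic $D'$, a single smooth $X'$ of bidegree $(2,2)$ whose degeneracy locus is exactly $D'$, then connect $[X]$ to $[X']$ by a one-parameter family. Your pencil through $[X]$ and $[X']$ in $\bP^{59}=\bP(\Gamma(\cO_{\bP^2\times\bP^3}(2,2)))$ is in fact a cleaner way to build $B$ than the paper's normalization of a component of $\cN_{(2,2)}\times_{\cN_8}\ell$, and your observation that smoothness of $X'$ is automatic once its discriminant octic is smooth (a singular point of $X'$ over a rank-$3$ point forces $d(\det)=0$ there, and rank $\le 2$ forces a point of multiplicity $\ge 2$ on the octic) is exactly the point the paper uses when it asserts that every member of the fiber over a generic $[D']$ is smooth.

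The genuine gap is the step you yourself flag as the main obstacle: dominance of the determinant map $\bP^{59}\dashrightarrow\bP^{44}$. This cannot be obtained from a "standard Bertini/dimension count," because the count is exactly tight: $\bPGL_4$ acts on the $\bP^3$-factor preserving the discriminant octic up to scalar, so every fiber of the determinant map has dimension at least $15=59-44$. A dimension count therefore only shows that the image has dimension \emph{at most} $44$, and is consistent with the image being a proper subvariety; likewise, your incidence-variety argument still requires proving that the projection to the octic space is dominant, which is precisely the nontrivial content. The paper supplies this by citing Beauville's theorem on representations of plane curves by symmetric matrices of forms (\cite[Proposition 4.6]{beau-det}): for a generic octic $D'$ the fiber of $\varphi:\cN_{(2,2)}\dashrightarrow\cN_8$ is nonempty and is identified with the set of nonzero $2$-torsion points of $\Jac(D')$. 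That identification (via theta-characteristic/torsion line-bundle constructions and cohomological vanishing) is a genuine theorem, not a transversality statement, and without it or an equivalent (e.g.\ an explicit computation that the differential of the determinant map at a well-chosen point is surjective) your proof does not close. Everything else in your write-up is correct modulo this input.
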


\begin{proof} Essentially, we can smooth hypersurfaces
in products of projective spaces freely.

In detail, the parameter space of $(2,2)$ hypersurfaces 
in $\bP^2 \times \bP^3$ is a projective space of
dimension $59$; the moduli space $\cN_{(2,2)}$,
obtained by taking a
quotient for the natural action of $\bPGL_3 \times \bPGL_4$,
has dimension $36$. The parameter space of octic
plane curves is a projective space of dimension $44$;
the moduli space $\cN_8$ has dimension $36$. 

Taking determinants gives a rational map
$$\bP^{59} \dashrightarrow \bP^{44}$$
which descends to 
$$\varphi: \cN_{(2,2)} \dashrightarrow \cN_8.$$
This is known to be 
dominant \cite[Proposition 4.6]{beau-det}; indeed, if $D'$ is generic
then the fiber $\varphi^{-1}([D'])$ 
corresponds to the 
non-trivial two-torsion of the Jacobian of $D'$.
Furthermore, each $X' \in \phi^{-1}([D'])$ is smooth.  

Clearly, there is a curve $\ell \subset \bP^{44}$ joining
$[D']$ to $[D]$. Note that $\varphi([X])=[D]$; the
morphism $\varphi$ is clearly defined at $[X]$. 
Let $B$ be the normalization of an irreducible component
of $\cN_{(2,2)}\times_{\cN_8}\ell$ containing $[X]$.
Its image in $\cN_8$ contains $[D']$ by construction.
\end{proof}

\begin{rema} \label{rema:conic}
Hypersurfaces of bidegree $(2,2)$ in $\bP^2 \times \bP^3$ may also
be regarded as conic bundles over the second factor. The degeneracy 
surface in $\bP^3$ has degree six and at least eight nodes, corresponding
to rank-one fibers. As a byproduct of the proof of Theorem~\ref{theo:main},
we obtain failure of stable rationality for very general conic bundles of this
type.
\end{rema}

\section{The Brauer group of the special fiber}
\label{sect:brauer}

Let $K$ be a field. We write 
$$
H^n(K)=H^n(K, \bZ/2)
$$ 
for its $n$-th Galois cohomology with constant coefficients $\bZ/2$.
Let $K=k(X)$ be the function field of an algebraic variety $X$ over $k=\bC$, and let
$\nu$ be a divisorial (rank-one) valuation of $K$. For $n\ge 1$, 
we have a natural homomorphism
$$
\partial^n_{\nu}:  H^n(K)\ra H^{n-1}(\kappa(\nu)), 
$$
where  $\kappa(\nu)$ is the residue field of $\nu$. 
The group
$$
H^n_{nr}(K):=\cap_{\nu} \,\,\Ker(\partial^n_{\nu})
$$
is called the $n$-th unramified cohomology of $K$. It is a  stable birational invariant, by definition; 
it vanishes if $X$ is   stably rational.
Recall that for smooth projective $X$ we have
$$
\Br(X)[2]=H^2_{nr}(k(X)). 
$$

The following proposition is similar to the examples in  \cite[Section 3.5]{pirutka-survol}.

\begin{prop}
\label{prop:brauer}
Let $K=k(x,y)=k(\mathbb P^2)$, 
$X\ra \bP^2$ the quadric surface bundle defined in 
Example~\ref{exam:basic}, 
$$
\alpha= (x,y)\in \Br(K)[2],
$$  
and $\alpha'$ its image in $H^2(k(X))$.
Then $\alpha'$ is contained in 
$H^2_{nr}(k(X))$ and is nontrivial; in particular,
$$
H^2_{nr}(k(X))\neq 0.
$$ 
\end{prop}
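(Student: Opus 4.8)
The plan is to prove the two assertions --- that $\alpha'$ is unramified and that it is nonzero --- more or less independently, since they require different tools.

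\textbf{Step 1: Unramifiedness.} I would work divisor by divisor on $\bP^2$, using that the residue $\partial^2_\nu(\alpha')$ can only be nonzero for valuations $\nu$ of $k(X)$ lying over a prime divisor of $\bP^2$ where either $(x,y)$ ramifies or the quadric $Q$ degenerates --- i.e., over the lines $\{x=0\}$, $\{y=0\}$, $\{z=0\}$ and the conic $C=\{F=0\}$ (and, a priori, divisors on $X$ mapping to points, but those contribute nothing since $\alpha'$ comes from the base). For a valuation $\nu$ centered on a curve $\Delta\subset\bP^2$ where $Q$ has good reduction (rank $\ge 3$), the standard argument is that $\alpha'$ is unramified along $\nu$ because the residue of $(x,y)$ in $H^1(\kappa(\Delta))$ either vanishes or becomes trivial after passing to $\kappa(\nu)$ --- concretely, over $\{z=0\}$, $F$ restricts to a nonzero square times a unit so nothing happens; over $\{x=0\}$ and $\{y=0\}$ one checks the residue of $(x,y)$ is killed by a square appearing in the quadric form. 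The delicate locus is the conic $C$ and, above all, the four points where $C$ is tangent to the coordinate lines and the three coordinate points --- there the fiber of $\pi$ has corank $\ge 2$. For these I would either appeal directly to the residue computation in \cite[Theorem 3.17]{pirutka-survol} (which gives $H^2_{nr}(K(Q))$ in terms of $\mathrm{div}(a),\mathrm{div}(b),\mathrm{div}(d)$ for the diagonal form $\langle 1,a,b,abd\rangle$ with $a=x$, $b=y$, $d=xyF(x,y,1)$), checking that the class $(x,y)$ lies in the subgroup that formula identifies as unramified, or reprove the needed special cases by hand. I expect this step --- specifically the analysis at the non-reduced points of $D$, where one must understand $H^2_{nr}$ of the residue fields of the singular conic fibers --- to be the main obstacle; everything else is a routine residue calculation.

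\textbf{Step 2: Nontriviality.} To show $\alpha'\ne 0$ in $H^2(k(X))$ it suffices to exhibit one valuation or one specialization detecting it. The cleanest route is to restrict to a well-chosen subvariety: pick a general line or conic $L\subset\bP^2$ (or better, a general point of a coordinate line) so that $X_L\to L$ is a quadric surface bundle over a curve whose generic fiber is still described by $\langle 1,x,y,xyF\rangle$, and show $\alpha|_{k(X_L)}\ne 0$. Alternatively --- and this is what I would actually try first --- restrict to the generic fiber $Q$ over $K=k(x,y)$ itself and show $(x,y)$ is nonzero in $H^2(K(Q))$. By the standard exact sequence for the function field of a quadric (Arason's theorem), the kernel of $H^2(K)\to H^2(K(Q))$ is generated by the Clifford invariant of $Q$, i.e. by $(a,b)+(\text{something involving }d)$ --- for the form $\langle 1,a,b,abd\rangle$ the relevant class is $(a,b)\cdot$(discriminant corrections); one computes the even Clifford invariant and checks that $(x,y)$ is \emph{not} in the span of that single class in $\Br(K)[2]=H^2(k(x,y))$. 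Since $H^2(k(x,y))$ is well understood (it injects into a sum of residues), this is a concrete finite check: $(x,y)$ has residue $(y)$ along $\{x=0\}$ and residue $(x)$ along $\{y=0\}$, while the Clifford class of $Q$ has prescribed (different) residue pattern coming from $d=xyF$; comparing residues along, say, $\{x=0\}$ shows $(x,y)$ is not a multiple of it. Hence $\alpha'\ne 0$.

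\textbf{Assembly.} Combining Step 1 (so that $\alpha'\in H^2_{nr}(k(X))$) with Step 2 (so that $\alpha'\ne 0$) immediately gives $H^2_{nr}(k(X))\ne 0$. I would present Step 2 before Step 1 in the final writeup if the nontriviality turns out to be short, but logically either order works. The one place to be careful throughout is the passage from valuations of $K=k(\bP^2)$ to valuations of $k(X)$: for unramifiedness one uses that $\partial_\nu$ on $k(X)$ for $\nu$ over a ``good'' curve $\Delta$ is controlled by $\partial_\Delta$ on $K$ followed by restriction to the residue field of the quadric bundle over $\Delta$, and for the ``bad'' curves one genuinely needs the structure of the degenerate fibers --- which is exactly where \cite{pirutka-survol} does the work for us.
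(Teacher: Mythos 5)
Your overall architecture matches the paper's: nontriviality via Arason's theorem applied to the generic fiber $Q/K$, and unramifiedness via a case-by-case residue computation using \cite{pirutka-survol} at the degenerate locus. Two of your emphases are misplaced, though. For Step 2 the paper is sharper than your plan: since the discriminant $d=xyF(x,y,1)$ of $Q$ is not a square in $K$, Arason's theorem already gives that $H^2(K)\to H^2(K(Q))$ is \emph{injective}, so no Clifford-invariant computation is needed; your ``kernel generated by the Clifford invariant'' is the statement for Pfister neighbors, and for a four-dimensional form with nontrivial discriminant the kernel is zero (your route would still reach the right conclusion, but by a detour). And the conic $C=\{F=0\}$ is not delicate at all: $(x,y)$ has trivial residue along every curve other than $L_x,L_y,L_z$, so the commutative diagram relating $\partial_{C_\nu}$ on $H^2(K)$ to $\partial_\nu$ on $H^2(K(Q))$ disposes of it immediately.

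The genuine gap is your parenthetical claim that divisorial valuations of $k(X)$ whose center in $\bP^2$ is a closed point ``contribute nothing since $\alpha'$ comes from the base.'' This is false as a principle, and it is exactly where half of the paper's proof lives. A class pulled back from $K$ can perfectly well ramify along a divisor of a model of $X$ lying over a point of $\bP^2$, and one must argue point by point. You later partially retract this by listing the coordinate points and the tangency points as delicate, but that list is still incomplete: \emph{every} closed point of $L_x\cup L_y\cup L_z$ must be treated, not only the finitely many where the fiber has corank $\ge 2$. The paper's codimension-two analysis has three cases: (i) center off the coordinate lines, where $\alpha$ is a cup product of units and the residue vanishes; (ii) center on exactly one line, say a point of $L_x$ other than $\mp_y,\mp_z$, where $y$ takes a nonzero complex value, hence is a square in $\widehat{\mo_\nu}$, so $\alpha'$ already dies in $H^2(K(Q)_\nu)$; (iii) center on two lines, where $F$ is a nonzero constant, hence a square, and \cite[Cor.~3.12]{pirutka-survol} applies. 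Without case (ii) in particular your argument does not close.
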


\begin{proof}
Let $Q$ be the generic fiber of the natural projection $\pi: X\to \mathbb P^2$. 
Since the discriminant of $Q$ is not a square, the homomorphism 
$$
H^2(K)\to H^2(K(Q))
$$ 
is injective \cite{arason}.
Thus we have to show that for every divisorial (rank-one) valuation 
$\nu$ on $K(Q)$ we have $\partial_\nu(\alpha)=0$. (For simplicity, we write
$\partial_{\nu}$ for $\partial^2_{\nu}$.)
We use  standard coordinates 
$x$ and $y$ (resp.~$y$ and $z$, resp.~$x$ and $z$) for the open charts of the projective plane.
Let us first investigate the ramification of 
$\alpha$ on $\mathbb P^2$; 
from the definition, we only have the following nontrivial residues:
\begin{itemize}
\item  $\partial_x(\alpha)=y$ at the line $L_x: x=0$, 
where we write $y$ for its class in the residue field $k(y)$ modulo squares;
\item $\partial_y(\alpha)=x$ at the line $L_y: y=0$,
\item $\partial_z(\alpha)=\partial_z(z, zy)=y$ at the line $L_z: z=0$, 
in coordinates $y$ and $z$ on $\bP^2$. 
\end{itemize}
Let $\mo_{\nu}$ be the valuation ring of $\nu$ in $K(Q)$ and consider
the center of $\nu$ in  $\mathbb P^2$. 
If $\mo_{\nu} \supset K$ then the $\partial_{\nu}(\alpha)=0$; hence there
are two cases to consider: 
\begin{itemize}
\item 
The center is the generic point of a curve $C_\nu$; we denote the corresponding
residue map $\partial_{C_{\nu}}:H^2(K) \ra H^{1}(\kappa(C_\nu)).$
\item 
The center is a closed point $\mp_\nu$.
\end{itemize}

\

{\it Codimension $1$}. 
The inclusion of discrete valuation rings 
$\mo_{\mathbb P^2, C_\nu}\subset \mo_{\nu}$ induces a commutative diagram
\begin{equation}\label{diagr}
\xymatrix{
H^2(K(Q))\ar[r]^{\partial_{\nu}} &H^{1}(\kappa(\nu))&\\
H^2(K)\ar[r]^{\partial_{C_{\nu}}}\ar[u] &H^{1}(\kappa(C_\nu))\ar[u] &
}
\end{equation}
Hence we have the following cases:
\begin{enumerate}
\item $C_\nu$ is different from  $L_x, L_y$, or $L_z$. 
Then $\partial_{C_\nu}(\alpha)=0$, so that $\partial_\nu(\alpha')$ is zero from the diagram above.
\item $C_\nu$ is one of the lines  $L_x, L_y$ or $L_z$. Note that modulo the equation of $C_\nu$, 
the element  $d:=F(x,y,z)$  is a nonzero square, 
so that \cite[Cor.~3.12]{pirutka-survol} gives $\partial_\nu(\alpha')=0$.
\end{enumerate}
We deduce that for any valuation $\nu$ of $K(Q)$ 
with center a codimension $1$ point in $\mathbb P^2_{\mathbb C}$ 
the residue $\partial_\nu(\alpha')$ vanishes.

\

{\it Codimension $2$.} Let $\mp_\nu$ be the center of $\nu$ on $\mathbb P^{2}$.
We have an inclusion of local rings  $\mo_{\mathbb P^2, \mp_\nu}\subset \mo_{\nu}$ 
inducing the inclusion of corresponding completions 
$\widehat{\mathcal O_{\mathbb P^2, \mp_\nu}}\subset  \widehat{\mo_{\nu}}$ 
with quotient fields $K_{\mp_\nu}\subset K(Q)_\nu$ respectively. 
We have three possibilities:
\begin{enumerate}
\item If $\mp_\nu\notin L_x\cup L_y\cup L_z$, then $\alpha$ is a cup product of units in 
$\mathcal O_{\mathbb P^2, \mp_\nu}$, hence units in $\mo_v$, so that $\partial_{\nu}(\alpha')=0$.
\item If $\mp_{\nu}$ lies on one curve, e.g., 
$\mp_\nu\in L_x\setminus (\mp_y \sqcup \mp_z)$,  where  $\mp_y=(0,1,0)$ and $\mp_z=(0,0,1)$,  
then the image of $y$ in $\kappa(\mp_\nu)$ is a nonzero complex number, hence a square in   
$\widehat{\mathcal O_{\mathbb P^2, \mp_\nu}}$, and $y$ is also a square in $\widehat{\mo_\nu}$. 
Thus $\alpha'=0$ in $H^2(K(Q)_\nu, \mathbb Z/2)$, and $\partial_\nu(\alpha')=0$.
\item If $\mp_\nu$ lies on two curves, e.g., $\mp_\nu=L_x\cap L_y$, 
then the image of $F(x,y,1)$ in $\kappa(\mp_\nu)$ is a nonzero complex number, hence a square.
By \cite[Corollary 3.12]{pirutka-survol}, we have $\partial_\nu(\alpha')=0$.
\end{enumerate}

\end{proof}

\section{Singularities of the special fiber}
\label{sect:sing}

In this section we analyze the singularities of the fourfold introduced in Section~\ref{sect:brauer}. 
Our main result is:

\begin{prop}
\label{prop:resolve}
The fourfold $X\subset \bP^2\times \bP^3$, with coordinates $(x,y,z)$ and $(s,t,u,v)$, respectively, 
given by 
\begin{equation}
\label{eqn:special}
yzs^2+xzt^2+xyu^2+F(x,y,z) v^2 = 0, 
\end{equation}
with 
\begin{equation}
\label{eqn:F}
F(x,y,z)= x^2+y^2+z^2 -2 (xy+xz+yz),
\end{equation}
admits a $\CH_0$-trivial resolution of singularities. 
\end{prop}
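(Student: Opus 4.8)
The plan is to exhibit an explicit resolution $\beta:\tilde X\to X$ by blowing up loci lying over the singular locus of $X$, and to check that all scheme-theoretic fibers of $\beta$ are universally $\CH_0$-trivial, so that Proposition~\ref{prop:restrivial} applies. First I would locate $\mathrm{Sing}(X)$. Since $X$ is a quadric surface bundle over $\bP^2$ whose degeneracy divisor is $D=\{x^2y^2z^2F(x,y,z)=0\}$, the fibers have corank one over the smooth points of $D$ (where $X$ is still smooth as a fourfold) and corank two exactly over the points where the rank-$\le 2$ locus sits, namely where two of the coordinate lines meet, i.e.\ the three points $\mp_z=(0,0,1)$, $\mp_y=(0,1,0)$, $\mp_x=(1,0,0)$, together with the tangency points of the conic $\{F=0\}$ with the coordinate lines. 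A local computation in charts (dehomogenizing, say $z=1$, and using the affine equation $ys^2+xt^2+xyu^2+F(x,y,1)v^2=0$) shows that $X$ is singular exactly along a finite set of points in the $\bP^3$-fibers over these special points of $\bP^2$ — concretely, over $\mp_z$ the fiber degenerates to a quadric of corank two (a union of two planes), and the singular locus of the total space is the singular line of that quadric, or isolated points after the absorption of squares — and similarly over the tangency points. I expect $\mathrm{Sing}(X)$ to be a union of a few rational curves and points.

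Next I would resolve. Over the corank-one locus $X$ is already smooth, so nothing is needed there. Over each corank-two point $p\in\bP^2$, the fiber $X_p\subset\bP^3$ is a quadric cone or a union of two planes; the standard move is to blow up $X$ along the singular locus of $X$ lying over $p$ (a line, or the reduced point). Because the local model is, after completion, of the shape $(\text{unit})\cdot(ab - cd \cdot(\text{function vanishing at }p))=0$ or $a^2 = bc\cdot u$ with $a,b,c$ fiber coordinates and $u$ a local parameter on the base, blowing up the evident smooth center produces an exceptional divisor which is a quadric bundle of one lower corank over the center, hence rational, and the fibers of $\beta$ over points of the center are either points, or smooth/rational quadric surfaces, or at worst cones over conics — all universally $\CH_0$-trivial. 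One may need to iterate the blowup (at most two or three times, corresponding to the drop in corank and to resolving the conic $\{F=0\}$'s tangencies with the lines) until $\tilde X$ is smooth; at each stage the center is smooth and the exceptional fibers are $\bP^k$'s or rational quadrics. I would also double-check that $\tilde X\to X$ is an isomorphism away from $\mathrm{Sing}(X)$ and that $\tilde X$ is genuinely smooth by examining the proper transform in each chart.

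Then I would invoke Proposition~\ref{prop:restrivial}: every fiber $\beta^{-1}(x)$ over a scheme point $x$ of $X$ is either a single reduced point (over the smooth locus), or a projective space, or a (possibly singular, but rational with a rational point) quadric surface or conic — in every case universally $\CH_0$-trivial, possibly using the criterion of Example~\ref{exam:trivial} for the reducible degenerate fibers (two planes meeting along a line always has a zero-cycle of degree $1$ on the intersection). Hence $\beta$ is a universally $\CH_0$-trivial morphism and $X$ admits a $\CH_0$-trivial resolution.

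\medskip
\noindent\textbf{Main obstacle.} The delicate part is the explicit local analysis of $\mathrm{Sing}(X)$ at the corank-two points and the verification that a single blowup (or a short, controlled sequence of blowups) along the natural smooth center both resolves the singularity and keeps every exceptional fiber $\CH_0$-trivial. In particular I must be careful that after "multiplying through by $xy$ and absorbing squares" the birational change of model does not introduce new, worse singularities, and that the centers I blow up are actually smooth — the tangency of the conic $\{F=0\}$ to the coordinate lines is exactly the feature that makes $D$ an octic with the right singularities, and it is where the fiber geometry is most degenerate, so that is where the resolution argument needs the most care. The rest is routine chart-by-chart bookkeeping.
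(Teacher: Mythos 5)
Your overall strategy --- locate $\mathrm{Sing}(X)$, blow up explicit smooth centers, and apply Proposition~\ref{prop:restrivial} after checking that every fiber of $\beta$ is universally $\CH_0$-trivial --- is exactly the strategy of the paper. But your description of the singular locus is wrong, and this derails the rest. You assert that the fibers of $\pi$ have corank one over the smooth points of $D$ and corank two only over finitely many points of $\bP^2$, so that $\mathrm{Sing}(X)$ sits over a finite subset of the base. In fact the coordinate lines occur with multiplicity two in $D=\{x^2y^2z^2F=0\}$, and over the generic point of, say, $L_y=\{y=0\}$ the fiber is $xzt^2+Fv^2=0$, a quadric of rank \emph{two}. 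Consequently $X$ is singular along curves dominating the coordinate lines: the singular locus is a union of six conics, three ``horizontal'' ones such as $C_y=\{y=t=v=0,\ zs^2+xu^2=0\}$ mapping $2{:}1$ onto $L_x,L_y,L_z$, and three ``vertical'' ones lying over the tangency points $\mr_x,\mr_y,\mr_z$, where the rank drops to one (as it also does at $\mp_x,\mp_y,\mp_z$). A resolution organized around blowing up loci over finitely many points of $\bP^2$ cannot touch the singularities along the horizontal conics.

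The second problem is that essentially all of the content of the proposition lives in the ``routine chart-by-chart bookkeeping'' you defer. The paper blows up $C_y$, then the proper transforms of $C_x$ and $C_z$, then the three disjoint vertical conics; the six conics meet at nine special points, and one needs the \'etale-local normal form $a_1^2+a_2^2+a_3^2=(b_1b_2)^2$ (ordinary threefold double points along two intersecting lines) at those points to see that the blowups can be performed in any order and that the fibers of $\beta$ are a reduced point, a smooth quadric $\bF_0$, or $\bF_0\cup_\Sigma\bF_2$ with $\Sigma$ the $(-2)$-curve --- not a union of two planes meeting in a line, as you guess --- each of which is universally $\CH_0$-trivial by Example~\ref{exam:trivial}. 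Without carrying out these computations the argument is not a proof, and starting from your incorrect picture of $\mathrm{Sing}(X)$ they would not come out right.
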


We proceed as follows:
\begin{itemize}
\item identify the singular locus of $X$;
\item construct a resolution of singularities $\beta: \tilde{X}\to X$;
\item verify universal $\CH_0$-triviality of $\beta$. 
\end{itemize}

\subsection{The singular locus}
\label{sect:sing-loc}

Here we describe the singularities explicitly using affine
charts on $\bP^2 \times \bP^3$. The equations
(\ref{eqn:special}) and (\ref{eqn:F}) are symmetric
with respect to compatible permutations of $\{x,y,z\}$
and $\{s,t,u\}$. In addition, there is the symmetry
$$(s,t,u,v) \ra (\pm s, \pm t, \pm u, v)$$
so altogether we have an action by the semidirect
product $(\bZ/2\bZ)^3 \rtimes \fS_3$.

\subsubsection*{Analysis in local charts}
Let $L_x, L_y, L_z\subset \bP^2$ be the coordinate lines given by
$$
x=0, \quad y=0, \quad z=0, 
$$
respectively, and 
$$
\mp_x:=(1,0,0),\quad \mp_y:=(0,1,0), \quad \mp_z:=(0,0,1)
$$ 
their intersections.

The quadrics in the family \eqref{eqn:special} drop rank over coordinate lines $L_x, L_y, L_z$ 
and over the conic $C\subset \bP^2$, with equation \eqref{eqn:F} $$F(x,y,z)=0.$$
This conic is tangent to the coordinate lines in the points 
$$
\mr_x:=(0,1,1),\quad \mr_y:=(1,0,1), \quad \mr_z:=(1,1,0),
$$ 
respectively.  

\

By symmetry, if suffices to consider just two
affine charts:

\begin{itemize}
\item [{\bf Chart 1:}]
$z=u=1$. 
Equation~\eqref{eqn:special} takes the form 
\begin{equation}
\label{eqn:seq}
ys^2+xt^2+xy+F(x,y,1)v^2=0.
\end{equation}
Derivatives with respect to $s,t,v$ give
 \begin{equation}
 \label{pd}
 ys=0, \quad xt=0, \quad   vF(x,y,z)=0.
 \end{equation}
 Hence $xy=0$, from \eqref{eqn:seq}.
 Derivatives with respect to $y, x$ give
 \begin{equation}
 \label{pd1}
 s^2+x+(2y-2x-2)v^2=0,\quad t^2+y+(2x-2y-2)v^2=0.
 \end{equation}
 Since $xy=0$, we  have two cases, modulo symmetries:
 \begin{itemize}
 \item [{\em Case 1:}] $y=0$;
 \item [{\em Case 2:}] $x=0, y\neq 0$.
 \end{itemize}
 We analyze each of these cases:
  \begin{itemize}
 \item [{\em Case 1:}] $y=0$. 
 Then  $vF(x,y,z)=0$ (from (\ref{pd})) implies  
$$v(x-1)^2=0.$$
 So either $v=0$ or $x=1$. 
 If $v=0$,  from (\ref{pd1}) we obtain  $s^2+x=t=0$. 
 Hence we obtain the following equations for the
   singular locus:
  \begin{equation}\label{sing0}
 y=v=t=s^2+x=0.
 \end{equation}
 
 If $x=1$ then (\ref{pd}) implies $t=0$, 
 and  the remaining equation from (\ref{pd1}) gives $s^2+1-4v^2=0$.  Hence we obtain the following equations:
 \begin{equation}\label{sing1}
 x-1=y=t=s^2+1-4v^2=0.
 \end{equation}
 \item [{\em Case 2:}] $x=0, y\neq 0$. 
 From  (\ref{pd}) the condition $ys=0$ implies $s=0$. 
 There are two more cases: 
 $v=0$ or $v\neq 0$.  
 If $v=0$ the remaining equation (\ref{pd1}) gives $t^2+y=0$. Hence we obtain equations for the singularity: 
 \begin{equation}\label{sing2}
 x=v=s=t^2+y=0.
 \end{equation}
 
 If $v\neq 0$, then $F(0, y, 1)=(y-1)^2=0$ from (\ref{pd}), hence $y=1$. 
 The remaining equation from   (\ref{pd1}) gives 
 $$
 t^2+y+(2x-2y-2)v^2=t^2+1-4v^2=0.
 $$ 
 So we obtain equations for the singular locus:
  \begin{equation}\label{sing3}
 x=  y-1=s=t^2+1-4v^2=0.
 \end{equation} 
\end{itemize}

\item [{\bf Chart 2:}] $z=s=1$.
The equation of the quadric bundle is
$$
y+xt^2+xyu^2+F(x,y,1)v^2=0.
$$
As above, derivatives with respect to $t, v, u$ give
\begin{equation}
\label{pd2}
 xt=0, \quad xyu=0, \quad vF=0.
\end{equation}
Thus $y=0$ from the equation.
The conditions above and derivatives with respect to $x$ and $y$ yield
\begin{equation}
\label{pd3}
xt=v(x-1)^2=1+xu^2+(-2x-2)v^2=t^2+(2x-2)v^2=0.
 \end{equation}
The second equation implies that either $x=1$ or $v=0$.
 
 \
 
 \noindent
 If $x=1$, we obtain: 
 \begin{equation}\label{sing4}
 x-1=y=t=1+u^2-4v^2=0.
 \end{equation} 
 If $v=0$, we obtain:
  \begin{equation}\label{sing5}
 y=t=v=1+xu^2=0.
 \end{equation} 
\end{itemize}
 
 \

Collecting these computations, we obtain the following singularities:
 \begin{enumerate}
 \item 
 In the chart $ys^2+xt^2+xy+F(x,y,1)v^2=0:$
\begin{align*}
  C_{y,s}^\circ: & \    y=v=t=s^2+x=0 \\
  R_{y,s}^\circ: & \  x-1=y=t=s^2+1-4v^2=0 \\
  C_{x,t}^\circ: & \ x=v=s=t^2+y=0 \\
 R_{x,t}^\circ: & \ x=y-1=s=t^2+1-4v^2=0
\end{align*}
 \item In the chart $y+xt^2+xyu^2+F(x,y,1)v^2=0:$  
\begin{align*}
  R_{y,u}^\circ: &\   x-1=y=t=1+u^2-4v^2=0 \\
C_{y,u}^\circ: & \  y=t=v=1+xu^2=0
\end{align*}
\end{enumerate}

\subsubsection*{Enumeration of strata}
The singular locus of $X$ is a union of 6 conics. 
We distinguish between
\begin{itemize}
\item 
Horizontal conics $C_x,C_y, C_z\subset X$: 
these project onto the coordinate lines $L_x,L_y,L_z\subset \bP^2$. We express them using our standard coordinates
on $\bP^2 \times \bP^3$:
\begin{align*}
C_y=&\{  y=t=v=0, \ zs^2+xu^2=0  \}  \\
C_x=&\{  x=s=v=0, \ zt^2+yu^2=0  \}  \\
C_z=&\{  z=u=v=0, \ xt^2+ys^2=0  \}  
\end{align*}
The conics intersect transversally 
over $\mp_z$, $\mp_x, \mp_y\in \bP^2$, respectively:
\begin{align*}
C_x\cap C_y=&\mq_z:=(0,0,1)\times (0,0,1,0), \quad \pi(\mq_z)=\mp_z \\
C_y\cap C_z=&\mq_x:=(1,0,0)\times (1,0,0,0), \quad \pi(\mq_x)=\mp_x \\
C_x\cap C_z=&\mq_y:=(0,1,0)\times (0,1,0,0), \quad \pi(\mq_y)=\mp_y 
\end{align*}
 \item 
Vertical conics $R_y,R_x, R_z\subset X$: these project to the points 
$\mr_y, \mr_x, \mr_z\in \bP^2$:
\begin{align*}
R_y=&\{ x-z=y=t=0, \ s^2+u^2-4v^2=0\} \\
R_x=&\{ y-z=x=s=0, \ t^2+u^2-4v^2=0\} \\
R_z=&\{ x-y=z=u=0, \ s^2+t^2-4v^2=0\} 
\end{align*}
Vertical conics intersect
the corresponding horizontal conics 
transversally in two points:
\begin{align*}
R_y\cap C_y =& \{ \mr_{y+}, \mr_{y-}\} = 
			(1,0,1) \times (\pm i,0, 1, 0) \\
R_x\cap C_x =& \{ \mr_{x+}, \mr_{x-}\} =
			(0,1,1) \times (0,1,\pm i ,0) \\
R_z\cap C_z =& \{ \mr_{z+}, \mr_{z-}\} =
			(1,1,0) \times (1,\pm i, 0 ,0) 
\end{align*}
\end{itemize}

\subsubsection*{Local \'etale description of the singularities}
The structural properties of the resolution become clearer after
identifying \'etale normal forms for the singularities.

We now provide a local-\'etale description of the neighborhood of $\mq_z$. 
Equation \eqref{eqn:seq} takes the form
$$
ys^2+xt^2+xy+F(x,y,1)v^2=0.
$$
At $\mq_z$ we have $F(x,y,1)\neq 0$, so we can set
$$
v_1=\sqrt{F(x,y,1)}v 
$$
to obtain
$$
ys^2+xt^2+xy+v_1^2=0.
$$
Set $x=m-n$ and $y=m+n$ to get
$$
(m+n)s^2 + (m-n)t^2 + m^2 - n^2 + v_1^2 = 0
$$
or
$$
m(s^2+t^2) + n(s^2-t^2) + m^2 - n^2 + v_1^2 = 0.
$$
Then let 
$$
m=m_1-(s^2+t^2)/2 \quad \text{ and } \quad n=n_1+(s^2-t^2)/2
$$ 
to obtain
\begin{equation}
\label{eqn:NF1}
m_1^2 - n_1^2 + v_1^2 = ((s^2+t^2)^2 - (s^2-t^2)^2)/4 = s^2 t^2.
\end{equation}

We do a similar analysis in an \'etale-local neighborhood
of either of the points $\mr_{y\pm}$. The singular
strata  for $C_y$ and $R_y$ are given in (\ref{sing0}) and (\ref{sing1}):
$$\{y=t=v=s^2+x=0 \}, \{y=t=x-1=s^2+1-4v^2=0\}.$$
We first introduce a new coordinate $w=x-1$.
Thus the singular stratum is the intersection
of the monomial equations $y=t=vw=0$ and the hypersurface
$$s^2+w+1-4v^2.$$
We regard this as a local coordinate near $\mr_{y\pm}$.
Equation (\ref{eqn:seq}) transforms to
$$ys^2+wt^2+t^2+wy+y+v^2(-4y+(w-y)^2)=0.$$
Regroup terms to obtain
$$y(s^2+w+1-4v^2)+t^2(1+w)=-v^2(w-y)^2.$$
Let $t_1=t\sqrt{1+w}$, $s_1=s^2+w+1-4v^2$, and 
$w_1=w-y$ we obtain
\begin{equation}
\label{eqn:NF2}
ys_1+t_1^2=-v^2w_1^2.
\end{equation}

The normal forms (\ref{eqn:NF1}) and (\ref{eqn:NF2}) are
both equivalent to 
$$a_1^2+a_2^2+a_3^2=(b_1b_2)^2,$$
with ordinary threefold double points along the lines
$$\ell_1=\{a_1=a_2=a_3=b_1=0 \}, \quad
\ell_2 = \{a_1=a_2=a_3=b_2=0 \}.$$
A direct computation shows this is resolved by
blowing up $\ell_1$ and $\ell_2$ in {\em either} order.
The exceptional fibers over the generic points of
$\ell_1$ and $\ell_2$ are smooth quadric surfaces,
isomorphic to the Hirzebruch surface $\bF_0$. 
Over the origin, we obtain a copy
$$\bF_0 \cup_{\Sigma} \bF_2$$
where $\Sigma\simeq \bP^1$ is the $(-2)$-curve on
$\bF_2$ and has self-intersection $2$ on $\bF_0$.

By symmetry,
this analysis is valid at {\em all} nine special points
$$\mq_x,\mq_y,\mq_z,\mr_{x\pm}, \mr_{y\pm}, \mr_{z\pm}$$
where components of the singular locus 
(the horizontal and vertical conics) intersect. This
explains why we can blow these conics up in any
order.

\subsection{Resolution of singularities}
\label{sect:resolve}

\subsubsection*{What we need to compute}
We propose blowing up as follows:
\begin{enumerate}
\item{blow up $C_y$;}
\item{blow up the proper transform of $C_x$;}
\item{blow up the proper transform of $C_z$;}
\item{blow up the union of $R_x,R_y,$ and $R_z$,
which are disjoint.}
\end{enumerate}
Taking into account the symmetry, after the first step 
we must understand:
\begin{itemize}
\item{What are the singularities along the proper transform
of $C_x$?}
\item{What are the singularities along the proper transform
of $R_y$?}
\end{itemize}
Of course, answering the first questions clarifies the 
behavior along the proper transform of  $C_z$. And $R_x$
and $R_z$ behave the exactly the same as $R_y$.

Let $X_1$ denote the blow up of $C_y$ and $E_{1,y}$
the resulting exceptional divisor. We shall see that
\begin{itemize}
\item
$X_1$ is smooth at any point of the exceptional divisor 
$E_{1,y}$, except where $E_{1,y}$ meets the proper 
transforms of $C_x, C_z, R_y$.
\item
$E_{1,y}$ is 
also smooth, except where it meets the proper transforms
of $C_x, C_z, R_y$.
\item
The fibers of $E_{1,y}\to C_y$ are smooth
quadric surfaces away from $\mq_x,\mq_z,\mr_{y\pm}$,
over which the fibers are quadric cones.
\end{itemize}
Incidentally, $E_{1,y} \ra C_y$ admits sections and
$E_{1,y}$ is rational.

\subsubsection*{First blow up---local charts}

We describe the blow up of $C_y$ in charts.
We start in Chart 1, where $z=u=1$. Local equations for the
center are given in (\ref{sing0}) and we have a local chart
for each defining equation.

\begin{itemize}

\item {\bf Chart associated with $y$:}
Equations for the blow up of the ambient
space take the form
$$v=yv_1, \ t=yt_1, \ s^2+x=yw_1.$$ 
The equation of the proper transform of the quadric bundle is
$$w_1+xt_1^2+F(x,y,1)v_1^2=0,  s^2+x=yw_1.$$
The exceptional divisor $E_{1,y}$ is given by $y=0$, i.e., 
 $$
 w_1+xt_1^2+(x-1)^2v_1^2=0, \ s^2+x=0.
 $$
The blow up is smooth at any point of the exceptional divisor
in this chart. 
(The proper transforms of $R_y$ and $C_x$
do not appear in this chart.)
We analyze $E_{1,y}\ra C_y$:
 for any field $\kappa/\mathbb C$ and $a\in \kappa$,
the fiber above $s=a$, $x=-a^2$,  $y=v=t=0$, is given by
\begin{equation}\label{f1}
w_1-a^2t_1^2+(1+a^2)^2v_1^2=0,
\end{equation}
which is smooth in this chart.
Equation (\ref{f1}) makes clear 
that the exceptional divisor is rational and
admits a section over the center.  

\item {\bf Chart associated with $s^2+x$:} 
Equations for the blow up of the ambient
space take the form
$$y=(s^2+x)y_1,  v=(s^2+x)v_1, t=(s^2+x) t_1.$$
The proper transform of the quadric bundle has equation
$$y_1+xt_1^2+F(x,(s^2+x)y_1,1)v_1^2=0.$$
The exceptional divisor $E_{1,y}$ satisfies
$$y_1+xt_1^2+(x-1)^2v_1^2=0, \ s^2+x=0.$$
The blow up 
is smooth at any point of the exceptional divisor
in this chart, as the derivative with respect to $y_1$ is $1$
and the derivative of the second equation with respect to $y_1$ ( resp. $x$) is $0$ (resp. $1$).
(Again, the proper transforms of $R_y$ and $C_x$
do not appear in this chart.)
The fiber above $s=a$, $x=-a^2$,  $y=v=t=0$, is given by
\begin{equation}\label{f2}
y_1-a^2t_1^2+(1+a^2)^2v_1^2=0,
\end{equation}
which is smooth and rational in this chart.

\item {\bf Chart associated with $t$:}
Equations for the blow up of the ambient space are
$$y=ty_1, \ v=tv_1, \  s^2+x=tw_1$$
and the proper transform of the quadric bundle satisfies
$$y_1w_1+x+F(x,ty_1,1)v_1^2=0,  s^2+x=tw_1.$$
The exceptional divisor is given by $t=0$, i.e. 
$$y_1w_1+x+(x-1)^2v_1^2=0, \  s^2+x=0.$$
The blow up 
is smooth along the exceptional divisor, 
except at the point
$$t=v_1=y_1=s=w_1=x=0,$$
which lies over the point $\mq_z$. 
Thus the only singularity is along the proper transform
of $C_x$. 
The fiber above $s=a$, $x=-a^2$,  $y=v=t=0$, is given by
\begin{equation}\label{f3}
y_1w_1-a^2+(1+a^2)^2v^2=0,
\end{equation}
which is smooth in this chart unless $a=0$.

\item {\bf Chart associated with $v$:}
The equations are
$$y=vy_1, \ t=vt_1, \  s^2+x=vw_1$$  
and
$$y_1w_1+xt_1^2+F(x,vy_1,1)=0,  s^2+x=vw_1.$$
The exceptional divisor is given by $v=0$, i.e. 
$$y_1w_1+xt_1^2+(x-1)^2=0, \ s^2+x=0.$$
The blow up is smooth at any point of the
exceptional divisor except for
$$y_1=v=w_1=t_1=0, \ x=1, s=\pm i.$$

Thus the only singularities are along the proper transform
of $\mr_y$.
The fiber above $s=a$, $x=-a^2$,  $y=v=t=0$, is given by
\begin{equation}\label{f4}
y_1w_1-a^2t_1^2+(1+a^2)^2=0,\\
\end{equation}
which is smooth in this chart unless $a=\pm i$.
\end{itemize}
 
\subsubsection*{Singularities above $\mp_z$.}
Our goal is to show explicitly that the singularity
of the blow up in the exceptional divisor $E_{1,y}$
over $(x,y,z)=(0,0,1)=\mp_z$ is resolved on
blowing up the proper transform of $C_x$.
It suffices to examine the chart associated with $t$,
where we have equation
$$
y_1w_1+x+F(x,ty_1,1)v_1^2=0, \ s^2+x=tw_1,
$$ 
i.e.,
  \begin{equation}\label{resch}
   (y_1+t)w_1-s^2+F(-s^2+tw_1,ty_1,1)v_1^2=0, \ s^2+x=tw_1,
  \end{equation}
  and the proper transform of $C_x$ satisfies
$$y_1+t=0, \ w_1=s=v_1=0.$$
If we compute the singular locus for 
the equation (\ref{resch}) above, at the points of 
the exceptional divisor $t=0$ and above $x=0$, 
we recover the equations for the proper transform
of $C_x$ in this chart.
   
We analyze $X_2$, the blowup along the proper
transform of $C_x$. In any chart above $y_1=t=0$ 
we have $F=1$ so \'etale locally we can introduce
a new variable
$v_2=\sqrt{F} v_1$ to obtain
$$(y_1+t)w_1-s^2+v_2^2=0.$$
After the change of variables $y_2=y_1+t$: 
$$y_2w_1-s^2+v_1^2=0, $$
the singular locus is $y_2=s=w_1=v_2=0.$ 
Here $t$ is a free variable corresponding to an $\mathbb A^1$-factor.
This is the product of an ordinary threefold double point with curve,
thus is resolved on blowing up the singular locus. Note the exceptional
divisor is a smooth quadric surface bundle over the proper transform of $C_x$,
over this chart. (There is a singular fiber over the point where it meets
the proper transform of $C_z$.)

\subsubsection*{Singularities above $\mr_y=(1,0,1)\in \bP^2$.}
    
By the analysis above, we have only to consider the chart of the first blowup associated with $v$.
Recall that it is obtained by setting 
 $$
 y=vy_1, \ t=vt_1, \  s^2+x=vw_1
 $$ 
with equation
 $$
y_1w_1+xt_1^2+F(x,vy_1,1)=0.
 $$ 
The exceptional divisor is given by $v=0$.
The proper transform $R'_y$ of the conic 
$$
 R_y: x-1=y=t=0, \ s^2+1-4v^2=0
$$ 
is then
 \begin{equation}\label{strtrc2}
 x-1=y_1=t_1=0, \ w_1-4v=0, \ s^2+1-4v^2=0.
 \end{equation}
We obtain these equations by inverting the local equation
for the exceptional divisor.
Eliminating $x$ from the equation of $X_1$ yields
an equation that can be put in the form
$$
y_1(w_1-4v)+(-s^2+vw_1)t_1^2+(s^2-vw_1+vy_1+1)^2=0.
$$
Writing $w_2=w_1-4v$ we obtain
$$
y_1w_2+(-s^2+vw_2+4v^2)t_1^2+(s^2-vw_2-4v^2+vy_1+1)^2=0.
$$
The curve $R'_y$ may be expressed as a complete intersection
$$y_1=w_2=t_1=0, \quad \sigma:=(s^2-4v^2+1)+v(y_1-w_2)=0;$$
the coefficient 
$$c:=-s^2+vw_2+4v^2$$
is non-vanishing along $R'_y$ in this chart so we may introduce an
\'etale local coordinate $t_2=\sqrt{c}t_1$.
Then our equation takes the form
$$y_1w_2+t_2^2+\sigma^2=0.$$
In other words, we have ordinary threefold double points along
each point of $R'_y$. Blowing up $R'_y$ resolves the singularity,
and the exceptional divisor over $R'_y$ is fibered in smooth quadric surfaces.

\subsection{$\CH_0$-triviality of the resolution}
Let $E_{1,y}$ denote the exceptional divisor after blowing
up $C_y$. We've seen that
the projection $E_{1,y}\ra C_y$ is a quadric
surface bundle. The fibers are smooth away from $\mq_x,
\mq_z,$ and $\mr_{y\pm}$; over these points the fibers are quadric
cones.

Let $E_{1,x}$ denote the exceptional divisor after
blowing up the proper transform $C'_x$ of $C_x$. 
The fibration $E_{1,x} \ra C'_x$ is also a quadric surface
bundle. The fibers are smooth away from $\mq_y$ and
$\mr_{x\pm}$, where the fibers are quadric cones.

Let $E_{1,z}$ denote the exceptional divisor on
blowing up the proper transform $C'_z$ of $C_z$,
after the first two blow ups. Again
$E_{1,z} \ra C'_z$ is a quadric surface bundle,
smooth away from $\mr_{z\pm}$; the fibers
over these points are quadric cones.

Finally, we blow up the proper transforms 
$R'_x,R'_y,R'_z$ of the disjoint vertical conics.
The local computations above show that the resulting
fourfold $\tilde{X}$ is smooth and the exceptional divisors
$$E_{2,x} \ra R'_x, \ 
E_{2,y} \ra R'_y, \ 
E_{2,z} \ra R'_z,$$
are smooth  rational quadric surface bundles. 

To summarize, fibers of $\beta:\tilde{X} \ra X$ are one of the following:
\begin{itemize}
\item{if $x$ is not contained in any of conics, $\beta^{-1}(x)$ is a point;}
\item{if $x$ is contained in exactly one of the conics, $\beta^{-1}(x)$
is a smooth quadric surface isomorphic to $\bF_0$; when $x$ is a generic point
of one of the conics, then $\beta^{-1}(x)$ is rational over the residue field of $x$;}
\item{if $x$ is contained in two of the conics, $\beta^{-1}(x)=\bF_0 \cup_{\Sigma}\bF_2$,
where $\bF_2$ is the proper transform of a quadric cone appearing as a degenerate
fiber, $\Sigma \subset \bF_2$ is the $(-2)$ curve, and $\Sigma\subset \bF_0$ has
self-intersection $2$.}
\end{itemize}
An application of Proposition~\ref{prop:restrivial} yields that $\beta$
is universally $\CH_0$-trivial.

\section{Analysis of Hodge classes}
\label{sect:rational}

Our approach follows Section 2 of \cite{voisin-stable}. As explained
in Proposition~\ref{prop:ratmain}, a quadric surface bundle over a rational surface $\pi:X\ra S$ is rational
provided $X$ admits an integral class of type $(2,2)$
meeting the fibers of $\pi$ in odd degree.
Here we analyze how these classes occur.

We start by reviewing the
Hodge-theoretic inputs.
Let $\cY\ra B$ be the family of all smooth hypersurfaces in $\bP^2 \times \bP^3$ of bidegree $(2,2)$, 
i.e., $B$ is the complement of the discriminant in
$\bP(\Gamma(\cO_{\bP^2 \times \bP^3}(2,2)))$. For each $b \in B$, let $Y_b$ denote the fiber over $b$.
The Lefschetz hyperplane theorem gives Betti/Hodge numbers
\begin{itemize}
\item
$b_{2i+1}(Y_b)=0$
\item 
$b_2(Y_b)=h^{1,1}(Y_b)=2, \ b_6(Y_b)=h^{3,3}(Y_b)=2$.
\end{itemize}
We compute $b_4(Y_b)$ by analyzing 
$Y_b \ra \bP^2$;
its degeneracy divisor is an octic plane curve $D_b$, of genus $21$.
Indeed, the fibers away from $D_b$ are smooth quadric surfaces
and the fibers over $D_b$ are quadric cones, so we have
$$\begin{array}{rcl}
\chi(Y_b)&=&\chi(\bP^1 \times \bP^1)\chi(\bP^2 \setminus D_b)+
\chi(\text{quadric cone})\chi(D_b) \\
	 &=& 4\cdot (3-(-40))+3\cdot(-40)=52.
\end{array}
$$
It follows that $b_4(Y_b)=46$.

We extract the remaining Hodge numbers using techniques of Griffiths
and Donagi \cite{DG} for hypersurfaces in projective space, extended
to the toric case by Batyrev and Cox.
Let $F$ be the defining equation of bidegree $(2,2)$ and consider the bigraded 
{\em Jacobian ring}:
$$
\Jac(F)= \bC[x,y,z; s,t,u,v] / \cI(F),
$$
where $\cI(F)$ is the ideal of partials of $F$. 
Note the partials satisfy Euler relations:
\begin{equation} \label{euler}
x\frac{\partial F}{\partial x}+
y\frac{\partial F}{\partial y}+
z\frac{\partial F}{\partial z}= 2F = 
s\frac{\partial F}{\partial s}+
t\frac{\partial F}{\partial t}+
u\frac{\partial F}{\partial u}+
v\frac{\partial F}{\partial v}.
\end{equation}
Consider the {\em vanishing cohomology} 
$$H^4(Y_b)_{van}:=H^4(Y_b)/H^4(\bP^2 \times \bP^3),$$
i.e., we quotient by $\left<h_1^2,h_1h_2,h_2^2\right>$
where $h_1$ and $h_2$ are pull-backs of the hyperplane classes
of $\bP^2$ and $\bP^3$ respectively.
Then we have \cite[Theorem 10.13]{batyrev-cox}:
\begin{itemize}
\item 
$H^{4,0}(Y_b)=H^{4,0}(Y_b)_{van} = \Jac(F)_{(-1,-2)} = 0$
\item 
$H^{3,1}(Y_b) = H^{3,1}(Y_b)_{van} \simeq 
\Jac(F)_{(1,0)} = \bC[x,y,z]_1 \simeq \bC^3$
\item
$H^{2,2}(Y_b)_{van} \simeq  \Jac(F)_{(3,2)}\simeq \bC^{37}$
\item 
$H^{1,3}(Y_b)=H_{1,3}(Y_b)_{van} \simeq  \Jac(F)_{(5,4)}\simeq \bC^3.$
\end{itemize}
The first two dimension computations imply the others by the formula
$$b_4(Y_b)=\sum_{p+q=4} h^{p,q}(Y_b);$$
or one may compute the Hilbert function of an ideal
generated by three forms of degree $(1,2)$ and four forms
of degree $(2,1)$, subject to the relations (\ref{euler}) but
otherwise generic.

We recall the technique of \cite[5.3.4]{voisin-book}:
\begin{prop}
Suppose there exists a $b_0\in B$ and $\gamma \in  H^{2,2}(Y_{b_0})$ 
such that the infinitesimal period map evaluated at $\gamma$
$$
\bar{\nabla}(\gamma):T_{B,b_0} \ra H^{1,3}(Y_{b_0})
$$
is surjective. Then for any $b\in B$ and any Euclidean neighborhood
$b\in B' \subset B$, the image of the natural map (composition of
inclusion with local trivialization):
$$
T_b: H^{2,2}(Y_{B'},\bR) \ra H^4(Y_b,\bR)
$$
contains an open subset $V_b \subset H^4(Y_b,\bR)$.
\end{prop}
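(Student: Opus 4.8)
The plan is to deduce this from the surjectivity hypothesis by an elementary point-set argument, using the fact that surjectivity of a linear map at one point spreads to a neighborhood and that monodromy then propagates the conclusion around $B$. First I would unwind the statement: $H^{2,2}(Y_{B'},\bR)$ denotes the real points of the local system $R^4\phi_*\bR$ restricted to $B'$, intersected (fiberwise, after the Hodge decomposition of $H^4$) with the subspace of classes that are of type $(2,2)$ at \emph{every} point of $B'$ — equivalently, the real classes whose image in $H^{3,1}\oplus H^{4,0}$ vanishes identically over $B'$ — and $T_b$ is the composition of this inclusion with the local trivialization of the local system near $b$, landing in $H^4(Y_b,\bR)$. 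The content of the proposition is that this ``horizontal space of everywhere-Hodge real classes'' is not too small: its image fills up an open subset of the real cohomology of a single fiber.

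The key steps, in order, are as follows. \textbf{Step 1.} Fix $b_0$ and $\gamma$ as in the hypothesis. Extend $\gamma$ to a flat (horizontal) real section $\tilde\gamma$ of $R^4\phi_*\bR$ over a simply connected Euclidean neighborhood $B''\ni b_0$. At $b_0$ the section $\tilde\gamma$ is of type $(2,2)$; away from $b_0$ it need not be, but its ``defect'' $p(\tilde\gamma(b))\in (H^{3,1}\oplus H^{4,0})(Y_b)$ varies holomorphically and vanishes at $b_0$. \textbf{Step 2.} Consider the family of horizontal real sections near $\tilde\gamma$, parametrized by an open ball $W$ in the finite-dimensional real vector space $H^4(Y_{b_0},\bR)$ (each vector determines, by flat transport, a horizontal section over $B''$). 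Restricting attention to those sections that remain of type $(2,2)$ over all of $B''$ cuts out a real-analytic subset; the point is to show that \emph{after moving the base point}, this subset maps onto an open set. The mechanism is the transversality coming from $\bar\nabla(\gamma)$ being surjective onto $H^{1,3}(Y_{b_0}) = \overline{H^{3,1}(Y_{b_0})}$: the derivative of the ``defect map'' $b\mapsto p(\text{section})(b)$, in the combined variables (base point $b$, choice of class), is surjective at $(b_0,\gamma)$, so by the implicit function theorem the locus where the defect vanishes to first order is a smooth manifold of the expected dimension, and one checks its image in a fixed fiber $H^4(Y_b,\bR)$ (via the trivialization) contains an open set $V_b$. \textbf{Step 3.} To pass from a neighborhood of $b_0$ to an arbitrary $b\in B$, invoke the irreducibility of $B$ (it is the complement of a discriminant hypersurface in a projective space, hence connected) together with the monodromy of the local system: the global monodromy group acts, and transporting $V_{b_0}$ by flat sections along paths, the union of the images remains open in each fiber. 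Since the hypothesis is on a single $\gamma$ at a single $b_0$ but the conclusion is phrased for all $b$ and all neighborhoods $B'$, one uses that the period map and its derivative are algebraic, so surjectivity of $\bar\nabla(\gamma)$ at $b_0$ forces surjectivity of $\bar\nabla$ of \emph{some} horizontal translate of $\gamma$ at a Zariski-dense set of points, and in particular near any prescribed $b$; then Step 2 applies verbatim at that $b$.

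The main obstacle, I expect, is \textbf{Step 2}: making precise that surjectivity of $\bar\nabla(\gamma)$ — an infinitesimal, one-point statement about the variation of Hodge structure — yields that the \emph{algebraic} (or real-analytic) subvariety of horizontal classes that stay of type $(2,2)$ is large enough to have open image after projection to one fiber. The subtlety is that a horizontal real class need not stay Hodge even infinitesimally unless its defect derivative vanishes, and one is trading the freedom of moving the base point $b$ against the codimension condition imposed by $p=0$; the bookkeeping of these dimensions, and the argument that the resulting manifold's tangent space surjects onto $H^4(Y_b,\bR)$ rather than a proper subspace, is exactly the content of \cite[5.3.4]{voisin-book} and is where the real work lies. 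Everything else — extending sections flatly, using simple connectivity locally, propagating via connectedness and monodromy of $B$ — is formal once Step 2 is in place.
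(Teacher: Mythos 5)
Your outline follows essentially the same route as the paper, which in fact gives no proof of this proposition at all: it simply recalls the statement from \cite[5.3.4]{voisin-book}. Your Steps 1--2 are a faithful sketch of that argument (flat transport of real classes, the ``defect'' map into $\mathcal{H}^4/F^2\mathcal{H}^4$, transversality at $(b_0,\gamma)$, implicit function theorem), and you correctly locate the crux in the claim that the zero locus of the defect map projects onto an open subset of $H^4(Y_b,\mathbb{R})$.

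One substantive point is glossed over, and it is exactly where your transversality claim could fail. Surjectivity of the \emph{combined} derivative of the defect map is not what you need (indeed it holds automatically, since real classes already surject onto $H^4/F^2$); what you need is that the \emph{kernel} of that derivative surjects onto the fiber $H^4(Y_b,\mathbb{R})$, i.e.\ that the defect of every nearby flat real class can be cancelled by moving $b$ alone. Griffiths transversality forces $\bar\nabla(\gamma)$ to land in $F^1/F^2=H^{1,3}(Y_{b_0})$, so the $H^{0,4}$-component (equivalently the $H^{4,0}$-component, in your conjugate convention) of the defect can never be corrected by varying $b$. The argument therefore requires $h^{4,0}(Y_b)=0$ --- this is precisely the paper's remark that the variation of Hodge structures is of weight two after a Tate twist --- and your write-up silently assumes it when you take the defect to live in $H^{3,1}\oplus H^{4,0}$ but invoke surjectivity onto $H^{1,3}$ only. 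Finally, the monodromy in your Step 3 is unnecessary: the set of $b'\in B$ admitting some $\gamma'\in H^{2,2}(Y_{b'})$ with $\bar\nabla(\gamma')$ surjective is the complement of an analytic subset of the connected variety $B$, hence dense once nonempty; any neighborhood $B'$ meets it, and the resulting open set is carried into $H^4(Y_b,\mathbb{R})$ by the local trivialization over $B'$.
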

Note that the image is the set of real degree-four classes that are of
type $(2,2)$ for some $b' \in B'$.
We should point out that our variation of Hodge structures is 
of weight two after a suitable Tate twist.

The infinitesimal condition is easy to check here.
Since 
$$B\subset \bP(\Gamma(\cO_{\bP^2 \times \bP^3}(2,2)))$$
we may identify
$$T_{B,b_0}=
(\bC[x,y,z; s,t,u,v] / \left<F_0\right>)_{(2,2)},$$
where $F_0$ is the defining equation of $Y_{b_0}$.
The infinitesimal period map
$$
T_{B,b_0} \ra \Hom(H^{2,2}(Y_b),H^{1,3}(Y_b))$$
is given by multiplication
$$(\bC[x,y,z; s,t,u,v]/\left<F_0\right>)_{(2,2)} \times
\Jac(F_0)_{(3,2)} \ra \Jac(F_0)_{(5,4)}.$$
For fixed $\gamma \in \Jac(F_0)_{(3,2)}$, 
the differential in Voisin's hypothesis is computed
by multiplying $\gamma$ with the elements of bidegree $(2,2)$ 
\cite[Theorem~6.13]{voisin-book}.

\begin{exam}
Consider the hypersurface $Y_{b_0} \subset \bP^2 \times \bP^3$
with equation
$$ \begin{array}{rcl}
F_0&=& (u^2+uv+ts) x^2+
(-t^2+u^2-v^2-s^2)xy +
(t^2+uv+ts) y^2\\ & & +
(-t^2+u^2-v^2-s^2)xz + 
(t^2-16tu-u^2+v^2+s^2)yz\\
   & & +
(-3uv-3ts+s^2)z^2.
\end{array}
$$
We computed the Jacobian ring using Macaulay 2 \cite{M2}. In particular,
we verified that
\begin{itemize} 
\item{$\Jac(F_0)_{(m_1,m_2)}=0$ for 
$$(m_1,m_2) \ge (13,2), (7,3), (3,5),$$
so in particular $Y_{b_0}$
is smooth;}
\item{the monomials $\{xz^4v^4, yz^4v^4, z^5v^4\}$ form a basis
for $\Jac(F_0)_{(5,4)}$.}
\end{itemize}
Setting $\gamma=z^3v^2$, the multiples of $\gamma$ 
generate $\Jac(F_0)_{(5,4)}$.
Hence this example satisfies Voisin's hypothesis on the differential
of the period map.
\end{exam}

\begin{prop} \label{prop:NL}
Consider the Noether-Lefschetz loci 
$$
\begin{array}{r} \{b \in B : Y_b \text{ admits an integral class of type $(2,2)$ meeting}\\
    			      \text{the fibers of $Y_b \ra \bP^2$ in odd degree} \}.
\end{array}
$$
These are dense in the Euclidean topology on $B$.
\end{prop}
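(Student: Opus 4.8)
The plan is to combine the infinitesimal surjectivity statement established in the preceding example with the density criterion for Noether-Lefschetz loci from \cite[5.3.4]{voisin-book}. First I would invoke the previous proposition: since the example exhibits a point $b_0 \in B$ and a class $\gamma \in H^{2,2}(Y_{b_0})$ for which $\bar{\nabla}(\gamma): T_{B,b_0} \to H^{1,3}(Y_{b_0})$ is surjective, we conclude that for every $b \in B$ and every Euclidean neighborhood $b \in B' \subset B$, the image of $T_b: H^{2,2}(Y_{B'},\bR) \to H^4(Y_b,\bR)$ contains a nonempty Euclidean-open subset $V_b \subset H^4(Y_b,\bR)$. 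Concretely, $V_b$ consists of real degree-four classes that become of type $(2,2)$ at some nearby parameter $b' \in B'$.

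The second step is to produce, inside $V_b$, an integral class meeting the fibers of $\pi: Y_b \to \bP^2$ in odd degree. The relevant functional here is the degree of intersection with a fiber class $f$ of $\pi$ (a surface class): this is a linear form $\lambda: H^4(Y_b,\bR) \to \bR$, rational on the integral lattice. The key point is that $\lambda$ does not vanish identically on $V_b$: indeed $\lambda$ is nonzero on the whole of $H^4(Y_b,\bR)$ — for instance $h_2^2$ already meets the fiber in positive degree, where $h_2$ is the pulled-back hyperplane class of $\bP^3$ — and $V_b$ is open, so $\lambda(V_b)$ is a nonempty open subset of $\bR$, hence contains a nonzero rational number of the form (odd integer)$/$(odd integer) after scaling, and in fact by openness we can find a class in $V_b$ lying in $H^4(Y_b,\bZ)$ and with odd intersection number against $f$: the integral classes are a lattice, their real span is all of $H^4(Y_b,\bR)$, so they are dense, and among the lattice points in the open cone $V_b$ those with odd $\lambda$-value form a nonempty set because $\lambda$ is a primitive (or at worst fixed-denominator) linear form and $V_b$ is a full-dimensional open set. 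Choosing such an integral class $\alpha$, by the defining property of $V_b$ there is $b' \in B'$ with $\alpha \in H^{2,2}(Y_{b'}) \cap H^4(Y_{b'},\bZ)$, and $\alpha$ still meets the fibers of $Y_{b'} \to \bP^2$ in odd degree since the fiber class and the intersection pairing are locally constant in the family.

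Thus every Euclidean neighborhood $B'$ of every $b \in B$ contains a point $b'$ lying in the Noether-Lefschetz locus in the statement, which is exactly the assertion that this locus is dense for the Euclidean topology.

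The main obstacle I anticipate is the bookkeeping in the second step: one must check carefully that the open set $V_b$ — which a priori is only known to be nonempty and open — actually meets the integral lattice in a point with \emph{odd} fiber-degree, rather than merely with nonzero or even fiber-degree. This is not automatic from openness alone and uses that the linear form ``intersection with the fiber'' takes all residues mod $2$ on the integral lattice (equivalently, that there is an integral class of odd fiber-degree at all, which is clear, together with the presence of a class of even fiber-degree so that odd values persist on a full-dimensional subcone). Once this parity statement is in hand, density follows formally; the Hodge-theoretic content is entirely concentrated in the infinitesimal computation already carried out in the example.
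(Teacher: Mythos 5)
Your overall strategy is the paper's: feed the infinitesimal surjectivity from the example into Voisin's criterion to obtain the open sets $V_b$, then locate an integral class of odd fiber-degree inside $V_b$. But two steps do not hold up as written. First, the claim that ``the integral classes are a lattice, their real span is all of $H^4(Y_b,\bR)$, so they are dense'' is false: a full-rank lattice is discrete, not dense, and a nonempty open subset of $H^4(Y_b,\bR)\simeq\bR^{46}$ need not contain any lattice point at all. What rescues the argument is that the image of $T_b$ is a \emph{cone}: it is the union over $b'\in B'$ of the real linear subspaces of classes of type $(2,2)$ on $Y_{b'}$, so one may replace $V_b$ by $\bR^{*}\cdot V_b$, and it then suffices that the \emph{rays} through odd-degree lattice classes be dense. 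This holds because those classes form a coset of a finite-index sublattice (a fixed odd-degree class plus the even-degree sublattice) --- which is precisely what the paper's phrase ``adding even degree classes to this fixed class yields a dense collection'' encodes. You need to invoke the cone structure explicitly; openness alone does not suffice.

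Second, and more substantively, the existence of an integral class of odd fiber-degree is not ``clear'' --- it is the one genuinely geometric input of the proposition. Writing $f=h_1^2$ for the fiber class and using $[Y_b]=2h_1+2h_2$, the tautological classes all pair evenly with $f$: one computes $h_1^2\cdot f=0$, $h_1h_2\cdot f=0$, $h_2^2\cdot f=2$ (so your $h_2^2$ shows the functional $\lambda$ is nonzero, but not that it is surjective mod $2$). A priori $\lambda(\Lambda)$ could equal $2\bZ$, in which case the Noether--Lefschetz loci in question would be empty. The paper closes this gap by exhibiting a smooth bidegree $(2,2)$ hypersurface containing a constant section $\bP^2\times\{pt\}$ of $\bP^2\times\bP^3\to\bP^2$; the class of that section has fiber-degree $1$, and since $B$ is connected and $\lambda$ is flat for the local system, odd-degree integral classes then exist in $\Lambda$ for every $b$. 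Without this construction (or an independent proof that $f$ is primitive in the unimodular lattice $H^4(Y_b,\bZ)$), your parity step does not close.
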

\begin{proof}
Let $\Lambda$ be the middle cohomology lattice of $Y_b$. 
It remains to check that classes in $\Lambda$ with odd degree with respect to $Y_b \ra \bP^2$
are dense in $\Lambda \otimes \bQ$. 
There exist odd-degree classes as we can write down a bidegree $(2,2)$ hypersurface 
containing a constant section of $\bP^2 \times \bP^3 \ra \bP^2$. 
Adding even degree classes to this fixed class yields
a dense collection of the desired classes.
\end{proof}

The Noether-Lefschetz loci produced by this argument have
codimension at most three in moduli; each is an algebraic subvariety
of
$B \subset \bP(\Gamma(\cO_{\bP^2\times\bP^3}(2,2)))\simeq \bP^{59}$ \cite{CDK}.
Any projective threefold in $\bP^{59}$
will meet the closures of 
infinitely many of these loci.

\section{Proof of Theorem~\ref{theo:main}}
We assemble the various ingredients developed above:
\begin{enumerate}
\item{Theorem~\ref{thm:main-help} guarantees that a very general
hypersurface of bidegree $(2,2)$ in $\bP^2 \times \bP^3$ fails to
be stably rational, provided we can find a special $X$ satisfying
its hypotheses.}
\item{The candidate example is introduced in Example~\ref{exam:basic}.}
\item{In Section~\ref{sect:brauer}, we show that $X$ has non-trivial
unramified second cohomology. This verifies the first hypothesis
of Theorem~\ref{thm:main-help}.}
\item{In Section~\ref{sect:sing}, we analyze the singularities of $X$,
checking that it admits a resolution with universally $\CH_0$-trivial
fibers.}
\item{Proposition~\ref{prop:ratmain} gives a cohomological
sufficient condition for rationality of $(2,2)$ hypersurfaces
in $\bP^2 \times \bP^3$; Proposition~\ref{prop:NL} shows this
condition is satisfied over a dense subset of the moduli space.}
\end{enumerate}
Consider a family $\phi: \cX \ra B$ of smooth 
$(2,2)$ hypersurfaces in $\bP^2\times \bP^3$
over a connected base $B$. If the base meets both the locus parametrizing
non-stably rational varieties and the Noether-Lefschetz loci then $\phi$
has both rational and irrational fibers.

\bibliographystyle{alpha}
\bibliography{quadrics}
\end{document}